\long\def\alert#1{\parindent2em\smallskip\hbox to\hsize
{\hskip\parindent\vrule
\vbox{\advance\hsize-2\parindent\hrule\smallskip\parindent.4\parindent
\narrower\noindent#1\smallskip\hrule}\vrule\hfill}\smallskip\parindent0pt}
 \newtheorem{thm}{Theorem}[section]
 \newtheorem{cor}[thm]{Corollary}
 \newtheorem{lem}[thm]{Lemma}
\theoremstyle{definition}
 \newtheorem{defn}[thm]{Definition}
\theoremstyle{remark}
 \numberwithin{equation}{section}
 \newtheorem*{theorem A}{\textbf{ Theorem  A}}
\newtheorem*{theorem  B}{\textbf{ Theorem   B}}
\newtheorem*{corollary  C}{\textbf{ Corollary   C}}
\newtheorem*{corollary D}{\textbf{ Corollary  D}}
\newtheorem*{theorem E}{\textbf{ Theorem  E}}
\newtheorem*{theorem F}{\textbf{ Theorem  F}}
\newtheorem*{theorem G}{\textbf{ Theorem  G}}
\newtheorem*{theorem H}{\textbf{ Theorem  H}}
\newtheorem*{corollary K}{\textbf{ Corollary  K}}
\newtheorem*{p a}{\textbf{Proof of Theorem  A}}
\newtheorem*{p b}{\textbf{Proof of Theorem  B}}
\newtheorem*{p c}{\textbf{Proof of Theorem  C}}
\newtheorem*{p d}{\textbf{Proof of Corollary  D}}
\newtheorem*{p e}{\textbf{Proof of Theorem  E}}
\newtheorem*{p f}{\textbf{Proof of Theorem  F}}
\newtheorem*{p g}{\textbf{Proof of Theorem  G}}
\newtheorem*{p h}{\textbf{Proof of Theorem  H}}
\begin{document}
%
%
%
\title[$2$-capability  of $2$-generator   $p$-groups of class two ]
 {  $2$-nilpotent multiplier and   $2$-capability  of finite $2$-generator $p$-groups of class two  }

 \author{F. Johari}
\author{A. Kaheni $ ^{*} $ }\thanks{${}^{*}{Corresponding ~author.~ Email: azamkaheni@birjand.ac.ir}$}

\address{Departamento de Matem\'atica, Instituto de Ci\^encias Exatas, Universidade  Federal de Minas Gerais, Av. Ant\^onio Carlos 6627, Belo Horizonte, MG, Brazil}
\email{farangisjohari@ufmg.br, farangisjohari85@gmail.com} 

\address{Department of Mathematics\\
 University of Birjand, Birjand, Iran}
\email{azamkaheni@birjand.ac.ir}

\thanks{\textit{Mathematics Subject Classification 2010.} 20C25, 20D15.}

\maketitle
\begin{abstract}
Let $p$ be a prime number. We give the explicit structure of   $2$-nilpotent multiplier for each finite $ 2$-generator $ p$-group of class two. Moreover,  $2$-capable groups in that class are characterized.
\end{abstract}

\noindent

\vspace{.5cm}\leftline{Keywords:  $2$-Capable group, $2$-Nilpotent
multiplier, $ p$-Group.}

\section{Introduction and  Motivation}
The first classification for finite $2$-generator $p$-groups of  class two for an odd prime $ p $ was  given by  Bacon and Kappe   \cite{BK}. In $1999,$ Kappe, Vissher, and Sarmin  \cite{KSV} generalized the previous classification to  the case $p=2.$  Later, Ahmad, Magidin, and Morse  \cite{AMM} discovered that these classifications were incomplete. They corrected these omissions by giving  a full classification of these groups as stated in the following.
\begin{thm}\cite[Theorem 1.1]{AMM}\label{pres}
 Let $p$ be a prime and $n > 2. $  Every $2$-generated $p$-group of class exactly $2$ corresponds to an ordered $5$-tuple of integers, $ (\alpha,\beta,\gamma;\rho,\sigma),$ such that: 
 \item[$(i)$]  $ \alpha\geq\beta\geq\gamma\geq 1;$
\item[$(ii)$]  $ \alpha+\beta+\gamma =n;$ 
\item[$(iii)$]  $0\leq\rho\leq\gamma$ and $0\leq \sigma\leq \gamma;$
where $ ( \alpha, \beta, \gamma; \rho, \sigma) $ corresponds to the group presented by
\[G=\langle a, b \ \vert \ [ a , b ] ^{ p^{ \gamma }} =[ a, b, b ]= [ a, b , a] = 1,\ a ^{ p^{ \alpha }} =[ a , b ] ^{ p^{ \rho }},  b ^{p^{ \beta } } =[ a , b ] ^{ p^{ \sigma }} \rangle. \]
Moreover:\newline
$\mathrm{FAMILY} $ $(1)$   If  $\alpha>\beta,$  then $G $ is isomorphic to:
\item[$(a)$]$    (\alpha, \beta, \gamma; \rho,\gamma),$ when $\rho\leq \sigma.$ 
\item[$(b) $]$   (\alpha, \beta, \gamma; \gamma, \sigma),$ when $0 \leq\sigma <\sigma+\alpha-\beta\leq \rho$ or $\sigma<\rho=\gamma.$
\item[$(c)$]$  (\alpha, \beta, \gamma; \rho, \sigma),$  when  $0 \leq\sigma < \rho< \mathrm{min} (\gamma, \sigma+\alpha-\beta).$\newline
$\mathrm{FAMILY} $  $(2) $  If $ \alpha=\beta>\gamma,$ or  $ \alpha=\beta=\gamma,$ and $p > 2,$ then $G$ is isomorphic to $ (\alpha, \beta, \gamma; \mathrm{min}(\rho,\sigma),\gamma).$\newline
$\mathrm{FAMILY }$ $(3)$ If   $ \alpha=\beta=\gamma,$ and $p = 2,$ then $G$ is isomorphic to:
\item[$ (a) $ ] $ (\alpha, \beta, \gamma; \mathrm{min}(\rho,\sigma),\gamma),$ when $0\leq \mathrm{min}(\rho,\sigma)<\gamma-1.$
\item[$(b) $ ] $ (\alpha, \beta, \gamma; \gamma-1, \gamma-1),$ when $\rho=\sigma=\gamma-1.$
\item[$(c)$]$ (\alpha,\beta,\gamma;\gamma,\gamma),$ when $ \mathrm{min}(\rho,\sigma)\geq\gamma-1$ and $\mathrm{max}(\rho,\sigma)=\gamma.$ \\
     The groups listed in $1(a)$-$3(c)$ are pairwise nonisomorphic. 
\end{thm}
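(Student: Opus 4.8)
The plan is to prove non-isomorphism by exhibiting abstract isomorphism invariants of $G$ that recover the $5$-tuple $(\alpha,\beta,\gamma;\rho,\sigma)$ from a group given in normalized form. First I would recover $(\alpha,\beta,\gamma)$. Since the class is exactly $2$, the derived subgroup $G'=\langle[a,b]\rangle$ is central and cyclic, and $[a,b]$ has order exactly $p^{\gamma}$, so $|G'|=p^{\gamma}$; as $G'$ is characteristic this makes $\gamma$ an invariant. Collapsing $G'$ yields $G/G'\cong C_{p^{\alpha}}\times C_{p^{\beta}}$, whence the ordered pair $\alpha\ge\beta$ is read off from the invariant factors of the abelianization. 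This reduces the whole statement to the following: for a fixed triple $(\alpha,\beta,\gamma)$, the admissible pairs $(\rho,\sigma)$ occurring in $1(a)$--$3(c)$ give pairwise non-isomorphic groups.

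For $\rho$ and $\sigma$ I would use the characteristic filtration of $G'$ by the power subgroups, namely $e_{k}:=\log_{p}\lvert G'\cap\mho_{k}(G)\rvert$, which is an invariant because $G'$ and each $\mho_{k}(G)$ are characteristic. Using the class-two power identity $(a^{i}b^{j})^{p^{k}}=a^{ip^{k}}b^{jp^{k}}[b,a]^{\,ij\binom{p^{k}}{2}}$ together with $a^{p^{\alpha}}=[a,b]^{p^{\rho}}$ and $b^{p^{\beta}}=[a,b]^{p^{\sigma}}$, one checks that for odd $p$ the commutator correction lies in $\mho_{k}(G')$ and is absorbed, giving $e_{\beta}=\gamma-\min(\rho,\sigma)$ and $e_{\alpha}=\gamma-\min(\rho,\sigma+\alpha-\beta)$. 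A short case analysis then shows that the pair $(e_{\alpha},e_{\beta})$ is injective on the Family $1$ normal forms and that $e_{\alpha}$ alone recovers $\min(\rho,\sigma)$ on the Family $2$ normal forms, so distinct normalized tuples differ in some $e_{k}$. The structural reason the normalizations use $\min(\rho,\sigma)$ when $\alpha=\beta$ is that the swap $a\leftrightarrow b$ then induces an isomorphism $(\alpha,\alpha,\gamma;\rho,\sigma)\cong(\alpha,\alpha,\gamma;\sigma,\rho)$, so only the unordered data is invariant; when $\alpha>\beta$ the two generators have different orders in $G/G'$ and the asymmetry between levels $\alpha$ and $\beta$ recovers the ordered pair.

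The main obstacle is the case $p=2$, $\alpha=\beta=\gamma$ (Family $3$). Here the squaring map misbehaves, $(xy)^{2}=x^{2}y^{2}[y,x]$, and since $\beta=\gamma$ the correction $[b,a]^{ij}$ contributes odd powers of $[a,b]$ to $\mho_{1}(G)$ independently of $\rho$ and $\sigma$; this contaminates $G'\cap\mho_{1}(G)$ and invalidates the clean odd-$p$ computation. The anomaly is present only at power level equal to $\gamma$, so for $p=2$ it is harmless in Families $1$ and $2$ (where either $\beta>\gamma$ or a higher clean level is available) and severe only in Family $3$, where in addition $\alpha=\beta$ leaves a single power level and the two-level separation used above is unavailable. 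A direct check shows that $3(a)$ contains an element of order at least $2^{\gamma+2}$ (a generator), while both $3(b)$ and $3(c)$ have maximal element order exactly $2^{\gamma+1}$ -- in $3(c)$ the product $ab$ attains this order through the squaring correction -- so the maximal element order separates $3(a)$ from the other two but not $3(b)$ from $3(c)$. To distinguish these I would instead use refined counting invariants computed from the squaring formula, such as $\lvert\mho_{1}(G)\rvert$, the numbers $\lvert\Omega_{k}(G)\rvert$ of solutions of $x^{2^{k}}=1$, or the number of elements of maximal order, and show these take distinct values on $3(a)$, $3(b)$, $3(c)$. Carrying out and verifying these counts is the delicate, computation-heavy heart of the argument.

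Finally I would assemble everything: given two distinct tuples in $1(a)$--$3(c)$, at least one of $|G'|$, the isomorphism type of $G/G'$, the sequence $(e_{k})$, or the refined square-counts in the $2$-group case differs, so the groups are non-isomorphic. Combined with the reductions recorded in the ``$G$ is isomorphic to'' clauses, which guarantee that every group of the prescribed type appears in the list, this establishes the classification.
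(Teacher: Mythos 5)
This statement is not proved in the paper at all: it is quoted, with citation, from Ahmad, Magidin, and Morse \cite{AMM}, so there is no internal proof to compare your proposal against; it has to be judged on its own. As a strategy sketch it has the right ingredients (the characteristic data $|G'|$, $G/G'$, the filtration $G'\cap\mho_k(G)$, and counting invariants for the bad $2$-group cases), but it contains one claim that is actually false, and this breaks the argument outside Family $3$.

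You assert that for $p=2$ the squaring anomaly is ``harmless in Families $1$ and $2$ (where either $\beta>\gamma$ or a higher clean level is available).'' Family $1$ only requires $\alpha>\beta\geq\gamma$, so $\beta=\gamma$ is allowed, and there the contamination hits exactly the level $k=\beta=\gamma$ that your invariant $e_\beta$ relies on. Concretely, take $p=2$, $\alpha=\gamma+1$, $\beta=\gamma$, and compare the listed normal forms $(\gamma+1,\gamma,\gamma;\gamma,\gamma)$ (Family $1(a)$ with $\rho=\sigma=\gamma$) and $(\gamma+1,\gamma,\gamma;\gamma,\gamma-1)$ (Family $1(b)$ with $\sigma=\gamma-1$). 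Both have $|G'|=2^{\gamma}$ and $G/G'\cong\mathbb{Z}_{2^{\gamma+1}}\oplus\mathbb{Z}_{2^{\gamma}}$. In the first group, $(ab)^{2^{\gamma}}a^{-2^{\gamma}}=[a,b]^{\pm 2^{\gamma-1}}$ forces $[a,b]^{2^{\gamma-1}}$ into $\mho_{\gamma}(G)$, so $e_{\gamma}=1$ even though $\min(\rho,\sigma)=\gamma$; in the second group, $b^{2^{\gamma}}=[a,b]^{2^{\gamma-1}}$ gives $e_{\gamma}=1$ as well, and $e_{\gamma+1}=0$ in both. One checks the lower levels $e_k$ also coincide. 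So your invariants are identical on two groups that the theorem asserts (correctly) to be nonisomorphic: they are separated only by the kind of refined count you reserve for Family $3$, e.g.\ the number of solutions of $x^{2^{\gamma}}=1$ is $2^{3\gamma}$ in the first group and $2^{3\gamma-1}$ in the second. Hence the ``short case analysis'' for Family $1$ fails as stated whenever $p=2$ and $\beta=\gamma$, and the delicate counting work is needed there too. Beyond this, two further gaps remain even as a sketch: the Family $3$ counts, which you yourself call the computation-heavy heart, are never carried out; and the entire existence-and-normalization half of the theorem is assumed rather than proved --- the closing appeal to ``the reductions recorded in the `$G$ is isomorphic to' clauses'' is circular, since those reductions (explicit changes of generators realizing, e.g., $(\alpha,\beta,\gamma;\rho,\sigma)\cong(\alpha,\beta,\gamma;\rho,\gamma)$ when $\alpha>\beta$ and $\rho\leq\sigma$) are part of what is being claimed, as is the fact that each presentation defines a group of order exactly $p^{\alpha+\beta+\gamma}$, on which your computations of $|G'|$ and $G/G'$ silently depend.
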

 The idea of computing the nonabelian tensor square and the nonabelian exterior square for these groups was started in \cite{BK,KSV}, but these computations were based on the previous classification. Magidin and Morse \cite{MM} computed various homological functors for these groups, using the new classification in \cite{AMM}.  These functors include the nonabelian tensor square, the nonabelian exterior square, and the Schur multiplier. They also determined which of these groups are capable. A group $ G $ is called capable if $ G $ is isomorphic to the center factor of some group $ H. $ The following result gives necessary and sufficient conditions for the capability of  $2$-generator $p$-groups of class two.  
\begin{lem}\cite[Theorems 63 and 67]{MM}\label{mcap}
 Let $G = G_p(\alpha,\beta,\gamma;\rho,\sigma)$  be a $2$-generator $p$-group of class two with the presentation as in Theorem \ref{pres}.  Then the following results hold:
 \begin{itemize}
 \item[$(a)$] For an odd prime number $p,$  $G$ is capable if and only if $\alpha-\beta=\rho-\sigma$ and $\beta=\rho.$
\item[$(b)$] For $p=2,$ $G$ is capable if and only if it meets one of the following conditions:
\begin{itemize}
\item[$(i)$] $\rho\leq\sigma, \alpha=\beta,$ and $\rho=\gamma;$
\item[$(ii)$]$\rho>\gamma, (\alpha-\beta)=(\rho-\sigma)>\delta_{\beta\gamma},$ and $\rho=\gamma,$ where $\delta_{\beta\gamma}$ is the Kronecker delta; 
\item[$(iii)$]$\rho=\sigma=\gamma=\beta, $ and $ \alpha=\gamma+1.$
\end{itemize}
 
  \end{itemize}
\end{lem}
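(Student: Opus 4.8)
The plan is to reduce capability to the vanishing of a single central subgroup. I would use the classical criterion of Beyl, Felgner and Schmid that a group $G$ is capable if and only if its epicenter $Z^{*}(G)$ is trivial, together with the identification of $Z^{*}(G)$ with the \emph{exterior center}
\[
Z^{\wedge}(G)=\{\, g\in G : g\wedge x = 1 \text{ in } G\wedge G \text{ for every } x\in G \,\},
\]
which is due to Ellis. Since the image of $g\wedge x$ under the commutator map is $[g,x]$, every element of $Z^{\wedge}(G)$ is central, so the whole question becomes: for which tuples $(\alpha,\beta,\gamma;\rho,\sigma)$ is $Z^{\wedge}(G_{p}(\alpha,\beta,\gamma;\rho,\sigma))=1$?

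First I would determine the center. Writing $c=[a,b]$, the class-two relations give $[a^{i}b^{j}c^{k},a]=c^{-j}$ and $[a^{i}b^{j}c^{k},b]=c^{i}$, so $a^{i}b^{j}c^{k}$ is central exactly when $p^{\gamma}\mid i$ and $p^{\gamma}\mid j$; hence $Z(G)=\langle a^{p^{\gamma}},b^{p^{\gamma}},c\rangle$. Next I would feed in the explicit description of the finite abelian group $G\wedge G$ for this family, as computed in \cite{MM}. For a central $g$, the collection identities yield $g\wedge y^{m}=(g\wedge y)^{m}$ and reduce $g\wedge xy$ to $(g\wedge x)(g\wedge y)$ up to a correction term lying in the image of $G'=\langle c\rangle$; consequently $g\wedge x$ for arbitrary $x$ is determined once we know $g\wedge a$, $g\wedge b$ and $g\wedge c$. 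Thus it suffices to evaluate these wedges on the three generators $a^{p^{\gamma}},b^{p^{\gamma}},c$ of $Z(G)$.

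The core step is then a linear-algebra computation over $\mathbb{Z}/p^{e}$. Writing a general central element as $z=a^{p^{\gamma}s}\,b^{p^{\gamma}t}\,c^{u}$, I would expand $z\wedge a$ and $z\wedge b$ inside the known exterior square and solve the system $z\wedge a=z\wedge b=1$ for $(s,t,u)$, then translate the resulting divisibility conditions on the exponents back into conditions on $(\alpha,\beta,\gamma;\rho,\sigma)$. For odd $p$ the $2$-adic pathologies are absent and the normal forms of FAMILY $(1)$ and FAMILY $(2)$ keep the number of subcases small, so the system should collapse to the single locus $\alpha-\beta=\rho-\sigma$ and $\beta=\rho$ of part $(a)$; here one uses that $\rho\le\gamma\le\beta$, so $\beta=\rho$ already forces $\beta=\gamma=\rho$.

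The hard part will be $p=2$. There the identity $x\wedge x=1$ interacts with the $2$-torsion of $G\wedge G$, so the vanishing of $z\wedge a$ and $z\wedge b$ is governed by $2$-adic valuations that behave differently from the odd case, and the classification itself acquires the additional normal forms of FAMILY $(3)$. I expect the bookkeeping to split according to whether $\rho\le\sigma$ with $\alpha=\beta$ and $\rho=\gamma$, whether $\rho=\gamma$ with $(\alpha-\beta)=(\rho-\sigma)>\delta_{\beta\gamma}$, or the degenerate case $\rho=\sigma=\gamma=\beta$ with $\alpha=\gamma+1$, and this is exactly how conditions $(i)$--$(iii)$ should arise. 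Matching each of these three subcases against the exterior-square computation, and verifying that no further tuple yields a trivial exterior center, is where the real effort lies.
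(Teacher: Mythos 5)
First, a point of comparison that matters here: the paper does not prove this lemma at all. It is imported verbatim from Magidin and Morse \cite[Theorems 63 and 67]{MM}, so there is no internal argument to measure your proposal against; the relevant benchmark is the proof in the cited source. Your strategy is in fact that source's strategy: identify the epicenter $Z^{*}(G)$ with the exterior center $Z^{\wedge}(G)$ (Ellis), compute $Z(G)=\langle a^{p^{\gamma}},b^{p^{\gamma}},[a,b]\rangle$, and decide for which parameter tuples the system $z\wedge a=z\wedge b=1$ has only the trivial solution inside the explicitly known exterior square. The preliminary steps you do carry out are correct: the center computation, the additivity of $z\wedge(-)$ in the central variable, the reduction of $z\wedge xy$ to $(z\wedge x)(z\wedge y)$ up to a correction term $z\wedge[y,x]$, and the observation that $\beta=\rho$ forces $\beta=\gamma=\rho$ because $\rho\le\gamma\le\beta$.

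The genuine gap is that the substantive content of the proof is never executed. You take the structure of $G\wedge G$ from \cite{MM} as given rather than deriving it, and then the two decisive steps are only predicted, not performed: for odd $p$ you assert the system \emph{should} collapse to $\alpha-\beta=\rho-\sigma$ and $\beta=\rho$, and for $p=2$ you explicitly defer the case analysis (``this is where the real effort lies''). That analysis is not routine bookkeeping: it is precisely where the conditions $(i)$--$(iii)$, the Kronecker delta $\delta_{\beta\gamma}$, and the exceptional group with $\rho=\sigma=\gamma=\beta$, $\alpha=\gamma+1$ come from, and one must also verify the negative direction, namely that no tuple outside these loci (in any of the three FAMILIES, for either parity of $p$) has trivial exterior center --- the ``only if'' half of the lemma. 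The odd-$p$ computation is also genuinely sensitive to $2$ being invertible (compare the paper's Lemma 4.1, where the exponent $\frac{p^{k}(p^{k}+1)}{2}$ appears and the hypothesis on $k$ can be weakened exactly when $p$ is odd), so even that case needs the valuation bookkeeping done in full rather than waved at. As it stands, your proposal is a correct roadmap --- essentially the roadmap of \cite{MM} itself --- but not a proof.
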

Using Theorem \ref{pres} and Lemma \ref{mcap}, all capable $2$-generator $p$-groups of class two are listed as follows:
\begin{cor}\label{2cap1}
Let $G$ be a  $2$-generator $p$-group of class two. Then the following assertions hold:
 \begin{itemize}
 \item[$(a)$] If  $p$ is odd, then $G$ is capable if and only if $ G $ is  isomorphic to exactly one of the following groups.

 \begin{itemize}
 \item[$(i)$] $G\cong G_1=\langle a, b \ \vert \ [ a , b ] ^{ p^{ \alpha }} = a ^{ p^{ \alpha }} = b ^{p^{ \alpha } } = [ a, b, b ]= [ a, b , a] = 1\rangle. $ 
 \item[$(ii)$]$G\cong G_2=\langle a, b \ \vert \ [ a , b ] ^{ p^{ \gamma}} = a ^{ p^{ \alpha }} = b ^{p^{ \alpha } } = [ a, b, b ]= [ a, b , a] = 1,~\alpha > \gamma\rangle.$ 
 \item[$(iii)$] $G\cong G_3= \langle a, b \ \vert \ [ a , b ] ^{ p^{ \gamma}} = a ^{ p^{ \alpha }} = [ a, b, b ]= [ a, b , a] = 1,  b ^{p^{ \beta } } =[ a , b ] ^{ p^{ \sigma}}  \rangle,$  where $\alpha-\beta=\gamma-\sigma$ and $ 0\leq  \sigma <\gamma.$
\end{itemize}
\item[$(b)$] If  $p=2,$  then  $G$ is capable if and only if $G$ is isomorphic to exactly one of the following groups.
\begin{itemize}
\item[$(iv)$]  $G\cong G_4=\langle a, b \ \vert \ [ a , b ] ^{ 2^{ \beta }} = a ^{ 2^{ \beta }} = b ^{2^{ \beta } } = [ a, b, b ]= [ a, b , a] = 1\rangle.$
\item[$(v)$]$G\cong G_5= \langle a, b \ \vert \  [ a , b ] ^{ 2^{ \beta }} = a ^{ 2^{ \beta + 1}} = b ^{2^{ \beta } } = [ a, b, b ]= [ a, b , a] = 1\rangle.$
\item[$(vi)$]$G\cong G_6=\langle a, b \ \vert \ [ a , b ] ^{ 2^{ \gamma}} = a ^{ 2^{ \alpha }} = b ^{2^{ \alpha } } = [ a, b, b ]= [ a, b , a] = 1,\alpha > \gamma\rangle.$
\item[$(vii)$]$G\cong G_7=\langle a, b \ \vert \ [ a , b ] ^{ 2^{ \gamma}} = a ^{ 2^{ \alpha }} = [ a, b, b ]= [ a, b , a] = 1,  b ^{2^{ \beta } } =[ a , b ] ^{ 2^{ \sigma}}  \rangle,$
 where $ \alpha - \beta > \delta_{  \beta \gamma}, $  $ \alpha - \beta = \gamma - \sigma,$  $ \alpha > \beta,$  $ \gamma >\sigma,$  and  $\delta_{ij}$ is the Kronecker  delta.
\end{itemize}
\end{itemize}

\end{cor}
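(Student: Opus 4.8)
The plan is to extract the list by feeding the numerical capability criteria of Lemma~\ref{mcap} into the canonical presentations of Theorem~\ref{pres} and then simplifying the two ``power'' relations. The guiding observation is that in a canonical tuple one always has $0\le\rho,\sigma\le\gamma\le\beta\le\alpha$, so whenever a criterion forces $\rho=\gamma$ the relation $a^{p^{\alpha}}=[a,b]^{p^{\rho}}$ collapses to $a^{p^{\alpha}}=1$ (since $[a,b]^{p^{\gamma}}=1$), and likewise $\sigma=\gamma$ gives $b^{p^{\beta}}=1$, whereas $\sigma<\gamma$ leaves a genuine relation $b^{p^{\beta}}=[a,b]^{p^{\sigma}}$. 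Thus the whole corollary reduces to a finite bookkeeping exercise: solve the capability constraints for $(\rho,\sigma)$ in each parameter regime, substitute, and read off the presentation.

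For odd $p$ I would organize the argument by the families of Theorem~\ref{pres}, equivalently by the relations among $\alpha,\beta,\gamma$, using the conditions of Lemma~\ref{mcap}(a) (in canonical form, $\rho=\gamma$ together with $\alpha-\beta=\gamma-\sigma$). In the symmetric regime $\alpha=\beta=\gamma$ this forces $\rho=\sigma=\gamma$, both power relations vanish, and the presentation becomes $G_1$. In the regime $\alpha=\beta>\gamma$ it again trivializes both power relations, yielding $G_2$. In the regime $\alpha>\beta$ the $b$-relation survives with $\sigma<\gamma$, and the identity $\alpha-\beta=\gamma-\sigma$ produces exactly $G_3$. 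One then checks the (immediate) converse, that each displayed tuple satisfies the hypotheses of Lemma~\ref{mcap}(a) once its parameters are written out.

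For $p=2$ it is cleaner to organize by the three subconditions of Lemma~\ref{mcap}(b). Subcondition (i) forces $\alpha=\beta$ and $\rho=\sigma=\gamma$, so both power relations vanish; separating $\alpha=\gamma$ from $\alpha>\gamma$ gives $G_4$ and $G_6$ respectively. Subcondition (iii), $\rho=\sigma=\gamma=\beta$ with $\alpha=\gamma+1$, gives the single group $G_5$ (in which $a$ acquires order $2^{\beta+1}$). Subcondition (ii), which in canonical form amounts to $\alpha>\beta$, $\rho=\gamma$ and $\alpha-\beta=\gamma-\sigma>\delta_{\beta\gamma}$, keeps the $b$-relation and produces $G_7$, the $2$-analogue of $G_3$ with the extra Kronecker-delta restriction recorded. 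I expect the delicate point to be precisely this $p=2$ bookkeeping: one must verify that the inequalities coming from capability are exactly those making the tuple simultaneously canonical and admissible, so that no spurious or repeated group slips in.

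Finally, pairwise non-isomorphism of $G_1,\dots,G_7$ requires no fresh work: each $G_i$ has been matched with a distinct canonical $5$-tuple $(\alpha,\beta,\gamma;\rho,\sigma)$, and the closing assertion of Theorem~\ref{pres} guarantees that distinct canonical tuples yield non-isomorphic groups. The remaining obstacle is therefore organizational, namely certifying that the case division over the regimes and subconditions is both exhaustive and mutually exclusive, so that every parameter choice permitted by Lemma~\ref{mcap} lands in exactly one of the seven presentations and no capable group is omitted.
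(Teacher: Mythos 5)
Your proposal is correct and is essentially the paper's own argument: the paper offers no proof of this corollary beyond the remark that it is obtained by combining Theorem \ref{pres} with Lemma \ref{mcap}, and your case-by-case bookkeeping (collapse the power relations when $\rho=\gamma$ or $\sigma=\gamma$, solve the capability constraints in each regime, and invoke the pairwise non-isomorphism clause of Theorem \ref{pres}) is exactly that omitted computation. One point deserves to be made explicit rather than passed over silently: the criteria you attribute to Lemma \ref{mcap} are in fact corrections of typos in the paper's statement of that lemma. As printed, part $(a)$ requires $\beta=\rho$, which together with the canonical inequalities $\rho\le\gamma\le\beta$ forces $\rho=\gamma=\beta$ and would exclude $G_2$ (and all $G_3$ with $\gamma<\beta$) from the capable list; and part $(b)(ii)$ contains the mutually contradictory conditions $\rho>\gamma$ and $\rho=\gamma$, so taken literally it is vacuous and $G_7$ would never arise. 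Your readings --- $\rho=\gamma$ together with $\alpha-\beta=\rho-\sigma$ in $(a)$, and $\rho>\sigma$, $\rho=\gamma$, $\alpha-\beta=\rho-\sigma>\delta_{\beta\gamma}$ in $(b)(ii)$ --- are the criteria actually proved by Magidin and Morse and are the only ones under which the corollary (and the rest of the paper, which repeatedly treats $G_2$ and $G_7$ as capable) is consistent; flagging this discrepancy would turn your silent fix into a genuine verification.
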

 
Burns and Ellis  \cite{BE} generalized the notion of   capability for groups to  $c$-capability.   A group $G$ is called $c$-capable if  $G\cong H/Z_c(H)$ for some group $H,$ where $ Z_c(H)  $ is the $c$-th term of the upper central series of $ H $ for all $ c $ with $ c\geq 1. $ Clearly, each $(c+1)$-capable group is $c$-capable. The converse of this statement is not true in general. A counterexample was given by Burns and Ellis  \cite{BE}. Despite of this flaw example, someone try to find  groups in which two concepts of  capability and  $c$-capability are equivalent for them. Finitely generated abelian groups, extra special $p$-groups, and also generalized extra special  $p$-groups are some such groups (for more details, see \cite{BE,NP, NJP}). Recently,  Monfared, Kayvanfar, and Johari \cite{FKJ} showed that  “capability” and “$2$-capability” are coincide for finite $2$-generator  $2$-groups of class two.
More precisely, they gave a classification of all $2$-capable  $2$-generator $2$-groups of class two and proved that for these groups each capable group is  $2$-capable.  Furthermore, they computed  the $2$-nilpotent multiplier of  $2$-capable  $2$-generator $2$-groups of class two. \\
The main result of the present paper is to classify all $2$-capable finite $2$-generator $ p$-groups of class two. At first, we determine the structure of $2$-nilpotent multipliers for all capable $2$-generator $p$-groups of class two when $ p$  is odd. Then for these  $p$-groups, we show that  each capable group is $2$-capable. Finally, we compute  $2$-nilpotent multipliers for all noncapable $2$-generator $p$-groups of class two for an arbitrary prime number $ p,$ to clarify  the structure of  $2$-nilpotent multiplier for each $2$-generator $ p$-group of class two.\par
Now, we state our main results.\\ The following result describes  $2 $-nilpotent multipliers for all capable finite $ 2 $-generator $ p $-groups of class two for $ p>2.$ 
\begin{theorem A}\label{k3}
{\em
Let $ p$ be an odd prime number and $G$  be a capable finite $ 2 $-generator $ p $-group of class two  with  the presentation as in Corollary  \ref{2cap1}(a).  Then  \begin{align*}\label{eq6}
 \mathcal{M}^{(2)}(G) \cong \left\{ \begin{array}{rl} 
  \mathbb{Z} _{ p^{ \alpha}} ^{ (5) }  \qquad\qquad    \qquad \qquad &\text{if}~G\cong G_1, \\
                                    \mathbb{Z}_{ p^{ \alpha}}^{ (2)}\oplus \mathbb{Z}_{ p^{ \gamma}}^{ (3) } \qquad \quad \qquad   &\text{if}~G\cong G_2,\\\mathbb{Z} _{ p^{ \alpha}  }\oplus\mathbb{Z}_{ p^{ \beta}}\oplus \mathbb{Z}_{ p^{ \sigma} }^{ (3) } \qquad \   &\text{if}~G\cong G_3, \\
                                \end{array} \right.                                
\end{align*}
where $\mathbb{Z} _{ r} ^{ (t) }$ denotes the direct sum of $t$ copies of $ \mathbb{Z} _r,  $ in which  $ \mathbb{Z} _r  $ is the cyclic group of order $r.$
}
\end{theorem A}
The next result gives the exact structure of  $2$-capable finite $ 2 $-generator $ p $-groups of class two. 
\begin{theorem B}
{\em Let $G$ be a  finite $2$-generator $p$-group of class two for odd prime $ p. $ Then $G$ is $2$-capable if and only if $ G $ is isomorphic to exactly one of the $p$-groups $ G_1, $ $ G_2,$ or $ G_3.$}
\end{theorem B}
 Now, one can obtain a full classification of all  $2$-capable finite $ 2 $-generator $ p $-groups of class two, using Theorem B and  \cite[Theorem 4.3]{FKJ}.
\begin{corollary C}\label{2cap}
{\em  Let $G$ be a  finite $2$-generator $p$-group of class two. Then the following results hold:
\begin{itemize}
\item[$ (a) $] If  $p$ is an odd prime number, 
 then $G$ is $2$-capable if and    only if $G$  is isomorphic to exactly one of the following $p$-groups.\\
$\qquad (i)$ $G\cong G_1=\langle a, b \ \vert \ [ a , b ] ^{ p^{ \alpha }} = a ^{ p^{ \alpha }} = b ^{p^{ \alpha } } = [ a, b, b ]= [ a, b , a] = 1\rangle.$\\
$\ \ (ii)$ $G\cong G_2=\langle a, b \ \vert \ [ a , b ] ^{ p^{ \gamma}} = a ^{ p^{ \alpha }} = b ^{p^{ \alpha } } = [ a, b, b ]= [ a, b , a] = 1,\alpha > \gamma\rangle.$\\
$\ \ (iii)$ $G\cong G_3= \langle a, b \ \vert \ [ a , b ] ^{ p^{ \gamma}} = a ^{ p^{ \alpha }} = [ a, b, b ]= [ a, b , a] = 1,  b ^{p^{ \beta } } =[ a , b ] ^{ p^{ \sigma}}  \rangle,$ where $\alpha-\beta=\gamma-\sigma$ and $ 0\leq \sigma <\gamma.$
\item[$ (b) $] If  $p=2,$  then  $G$ is $2$-capable if and only if $G$   is isomorphic to exactly one of the following 
$2$-groups.\\
$\ \ (iv)$  $G\cong G_4=\langle a, b \ \vert \ [ a , b ] ^{ 2^{ \beta }} = a ^{ 2^{ \beta }} = b ^{2^{ \beta } } = [ a, b, b ]= [ a, b , a] = 1\rangle.$\\
$\ \ (v)$ $G\cong G_5= \langle a, b \ \vert \  [ a , b ] ^{ 2^{ \beta }} = a ^{ 2^{ \beta + 1}} = b ^{2^{ \beta } } = [ a, b, b ]= [ a, b , a] = 1\rangle.$\\
$\ \ (vi)$ $G\cong G_6=\langle a, b \ \vert \ [ a , b ] ^{ 2^{ \gamma}} = a ^{ 2^{ \alpha }} = b ^{2^{ \alpha } } = [ a, b, b ]= [ a, b , a] = 1,\alpha > \gamma\rangle.$\\
$\ \ (vii)$ $G\cong G_7=\langle a, b \ \vert \ [ a , b ] ^{ 2^{ \gamma}} = a ^{ 2^{ \alpha }} = [ a, b, b ]= [ a, b , a] = 1,  b ^{2^{ \beta } } =[ a , b ] ^{ 2^{ \sigma}}  \rangle,$
 where $ \alpha - \beta > \delta_{  \beta \gamma}, $  $ \alpha - \beta = \gamma - \sigma,$  $ \alpha > \beta,$  $ \gamma >\sigma,$  and  $\delta_{ij}$ is the  Kronecker delta.
\end{itemize}
}
\end{corollary C}
The following result is immediately obtained from  Corollary  \ref{2cap1}, Theorem B,  and \cite[Corollary 4.4]{FKJ}.
\begin{corollary D}\label{2cap}{\em
Let $G$ be a  finite $2$-generator $p$-group of class two. Then $G$ is $2$-capable if and only if  $G$ is capable.}
\end{corollary D}
Recall that the epicenter of a group $ G, $ $ Z^*(G),$ is the smallest central subgroup of $G$ with capable quotient, which  is defined in \cite{bey}. In particular, $ G $ is capable if and only if $Z^*(G)=1.  $ Following \cite{BE,MK}, the  $2$-epicenter of $ G,$  $ Z_2^*(G),$ is the smallest subgroup of $Z_2(G)$ with $2$-capable quotient. In fact, $ G $ is $2$-capable if and only if  $ Z_2^*(G)$ is trivial. Moreover, $ Z^*(G)\subseteq Z_2^*(G).$ (For more information, see \cite[Lemma 2.1]{BE} ).\\
Here, we prove that the epicenter and the $2$-epicenter for  a finite  $ 2 $-generator $ p $-group of class two are coincide.
\begin{theorem E}
{\em Let $ G $ be a finite  $ 2 $-generator $ p $-group of class two. Then   $ Z^*(G)=Z_2^*(G). $}
  \end{theorem E}
In the sequel, we give the exact structure of $2$-nilpotent multipliers for all capable $2$-generator $p$-groups of class two  in Sections $2$ and $4.$ Moreover, the $2$-capability of such groups is discussed in Section $3.$

\section{$2 $-Nilpotent multipliers of some capable $ p $-groups }
Let $  G$ be a group presented as $ F/R $ to be the quotient group of a free group $F$ by a normal subgroup  $R.$ From \cite{LM}, the Baer invariant of a group $G$  with respect to the variety of nilpotent groups of class at most $2$  is called the $2$-nilpotent multiplier of $G,$  $\mathcal{ M}^{ (2) } (G),$ and  defined as follows:
\begin{center}
$\mathcal{ M}^{ (2) } (G)\cong \big{(}R \cap \gamma _{ 3} (F)\big{)} / [R, F, F],$
\end{center}
in which $ \gamma _{3} (F)  = [F, F, F].$ It is well known that 
the $2$-nilpotent multiplier of $G$ is abelian and independent of the choice of its free presentation (see \cite{LM}).

We need  the notion of  basic commutator for computing  the $2$-nilpotent multipliers of groups.
  \begin{defn}
Let $ X $ be an arbitrary subset of a free group and select an arbitrary total order
for $ X. $ The basic commutators on $ X, $ their weight $ wt, $ and the ordering among them
are defined as follows:
\begin{itemize}
\item[$(i)$] The elements of $ X $ are basic commutators of weight one, ordered according to the total order previously chosen. 
\item[$(ii)$] Having defined the basic commutators of weight less than $ n, $ a basic commutator of weight $  n $ is $d = [s,k ],$ where:
\begin{itemize}
\item[$(a)$] $ s $ and $ k $ are basic commutators and $wt(s) + wt(k) = n,$ and
\item[$(b)$] $s > k,$  and if $s = [s_{1} , s_{2} ],$ then $ k \geq s_{2} .$
\end{itemize}
\item[$(iii)$] The basic commutators of weight $ n $ follow those of weight less than $ n. $ The basic commutators of weight $n$ are ordered among themselves in any total order, but the most common used total order is lexicographic order, that is if $[b_{1}, a_{1}] $ and $[b_{2}, a_{2}] $ are basic commutators of weight $ n, $ then $[b_{1} , a_{1} ] < [b_{2 }, a_{2} ]$ if and only if $b_1 < b_2$ or $b_1= b_2$ and $a_1 < a_2.$ 
\end{itemize}
\end{defn}
Let $G$ be a finite $2$-generator $p$-group of class two for $ p>2$ with the free presentation $ F/R $ such that $ F $ is a free group generated by $ \{a,b\} $ with a normal subgroup $R.$ Then $\gamma_3(F)\subseteq R,$ because of nilpotency of $G.$ Thus
\begin{center}
$\mathcal{ M}^{ (2) } (G)\cong \dfrac{R \cap \gamma _{ 3} (F) }{[R,F,F]}\cong\dfrac{  \gamma _{ 3} (F)/\gamma_5(F)}{[R,F,F] /\gamma_5(F)},$
\end{center}
where $\gamma_5(F)  $ is the $5$-th term of the lower central series of $ F.$
It is known that $\gamma_3(F)/\gamma_5(F)$ is the free abelian group with the basis of all basic commutators of weights $3$ and $4$ on $\{a,b\}.$ By considering  $ a >b,$ we have
\[\gamma_3(F)/\gamma_5(F)= \langle [a,b,a]\gamma_5(F),[a,b,b]\gamma_5(F),[a,b,b,b]\gamma_5(F),\]\[[a,b,a,a]\gamma_5(F),[a,b,b,a]\gamma_5(F)\rangle.\]
Therefore, one can gain the structure of $\mathcal{ M}^{ (2) } (G)$ by providing a suitable  basis  for $[R,F,F]/\gamma_5(F).$
\par 
Now, we determine  the  structure of  $2$-nilpotent multipliers of  $p$-groups in Corollary \ref{2cap1}, whenever $p$ is odd.

\begin{lem}\label{k3}
Let $ G $ be the group $G_1$  presented  in Corollary \ref{2cap1}(a)$(i).$
 Then \[ \mathcal{M}^{(2)}(G) \cong \mathbb{Z} _{ p^{ \alpha}} ^{ (5) }. \]
\end{lem}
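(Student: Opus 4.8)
The plan is to work with the free presentation $G=F/R$, $F$ free on $\{a,b\}$, and exploit the identification already recorded above,
\[
\mathcal{M}^{(2)}(G)\cong \frac{\gamma_3(F)/\gamma_5(F)}{[R,F,F]/\gamma_5(F)},
\]
in which $\gamma_3(F)/\gamma_5(F)$ is free abelian of rank $5$ with basis the classes
\[
e_1=[a,b,a],\ e_2=[a,b,b],\ e_3=[a,b,a,a],\ e_4=[a,b,b,a],\ e_5=[a,b,b,b].
\]
Writing $n=p^{\alpha}$ and $c=[a,b]$, I would first check by an order count (using $|G_1|=p^{3\alpha}$) that $R=\langle a^{n},b^{n},c^{n}\rangle^{F}\,\gamma_3(F)$. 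Since $[\gamma_3(F),F,F]=\gamma_5(F)$, and conjugation by $F$ moves a $\gamma_3(F)$-element only by a $\gamma_4(F)$-commutator, the image of $[R,F,F]$ in $\gamma_3(F)/\gamma_5(F)$ is exactly the subgroup generated by the classes of $[w,x,y]$ with $w\in\{a^{n},b^{n},c^{n}\}$ and $x,y\in\{a,b\}$ (the $F$-conjugates contribute only further weight-$4$ commutators of these). Thus everything reduces to locating these twelve classes inside $\mathbb{Z}^5=\langle e_1,\dots,e_5\rangle$.

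Next I would run the collection. The two tools are the commutator collection formula
\[
[u^{n},v]\equiv [u,v]^{n}\,[u,v,u]^{\binom{n}{2}}\,[u,v,u,u]^{\binom{n}{3}}\pmod{\gamma_5(F)}
\]
and the Jacobi consequence $[a,b,a,b]\equiv[a,b,b,a]\pmod{\gamma_5(F)}$, which rewrites the only non-basic weight-$4$ commutator that occurs. Applying these and discarding $\gamma_4$-parts (which die after one further commutator), I expect the trivial values $[a^{n},a,\,\cdot\,]=[b^{n},b,\,\cdot\,]=1$ together with
\[
[a^{n},b,a]\equiv n e_1+\tbinom{n}{2}e_3,\quad [a^{n},b,b]\equiv n e_2+\tbinom{n}{2}e_4,
\]
\[
[b^{n},a,a]\equiv -n e_1-\tbinom{n}{2}e_4,\quad [b^{n},a,b]\equiv -n e_2-\tbinom{n}{2}e_5,
\]
and from $w=c^{n}$ the clean values $[c^{n},a,a]\equiv n e_3$, $[c^{n},a,b]\equiv[c^{n},b,a]\equiv n e_4$, and $[c^{n},b,b]\equiv n e_5$, all modulo $\gamma_5(F)$.

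Finally I would assemble these classes. The relator $c^{n}$ at once yields $n e_3,\,n e_4,\,n e_5$. Here the hypothesis that $p$ is odd is decisive: then $n=p^{\alpha}$ is odd, so $\binom{n}{2}=n\cdot\frac{n-1}{2}$ is a multiple of $n$, whence $\binom{n}{2}e_3\in\langle n e_3\rangle$ and similarly for $e_4,e_5$. Subtracting the appropriate multiples of the $[c^{n},x,y]$ from $[a^{n},b,a]$ and $[a^{n},b,b]$ recovers $n e_1$ and $n e_2$, and the same divisibility shows that every one of the twelve classes already lies in $p^{\alpha}\mathbb{Z}^5$. Therefore $[R,F,F]/\gamma_5(F)=p^{\alpha}\mathbb{Z}^5$ and
\[
\mathcal{M}^{(2)}(G)\cong \mathbb{Z}^5/p^{\alpha}\mathbb{Z}^5\cong \mathbb{Z}_{p^{\alpha}}^{(5)}.
\]
The main obstacle is the bookkeeping in the collection formula, in particular tracking the $\binom{n}{2}$ coefficients and the rewriting $[a,b,a,b]\equiv[a,b,b,a]$, and then isolating the single arithmetic fact $p^{\alpha}\mid\binom{p^{\alpha}}{2}$ on which the odd-$p$ case turns; this divisibility fails for $p=2$, which is precisely why that case must be handled separately.
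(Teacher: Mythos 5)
Your proposal is correct and follows essentially the same route as the paper: the same free presentation $F/R$, the same identification $\mathcal{M}^{(2)}(G)\cong\bigl(\gamma_3(F)/\gamma_5(F)\bigr)/\bigl([R,F,F]/\gamma_5(F)\bigr)$, the same collection-formula expansion modulo $\gamma_5(F)$ (the paper cites this as Lemma 3.3 of Niroomand--Parvizi rather than deriving it), and the same decisive arithmetic fact $p^{\alpha}\mid\binom{p^{\alpha}}{2}$ for odd $p$, yielding $[R,F,F]/\gamma_5(F)=p^{\alpha}\mathbb{Z}^{5}$. Your version is merely more explicit in listing the twelve commutator classes and the Jacobi rewriting $[a,b,a,b]\equiv[a,b,b,a]$, whereas the paper works directly with arbitrary entries $f_i\in F$ and reads off the basis of $p^{\alpha}$-th powers of the five basic commutators.
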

\begin{proof} 
Let $F/R$  be a free presentation for $G_1$ such that  $F$ is a free group on $\{ a, b\}$ and $R=\langle   a ^{ p^{ \alpha }}, b ^{p^{ \alpha }}, [ a , b ] ^{p^{\alpha}}, [ a, b, b ], [ a, b , a] \rangle^F.$ Hence, 
\[\dfrac{[ R , F , F ]}{\gamma _{5 } ( F) }=\dfrac{ \langle [ a^{ p^{ \alpha } } , f_1 , f_2 ],  [ b^{ p^{ \alpha } }, f_3 , f_4 ] , 
[ [a, b]^{ p^{ \alpha } }, f_5 , f_6 ]  \mid  f_i \in F, 1\leq i \leq 6 \rangle^{F}  \gamma _{5 } ( F) }{\gamma _{5 } ( F) }.
\]
Using 
\cite[Lemma 3.3]{NP}, we obtain
\[[ a^{p^{\alpha}} , f_1 , f_2 ]^f \equiv [ a , f_1 , f_2 ]^{p^{\alpha}}[ a , f_1,a , f_2 ]^{\left(^{p^{\alpha}}_{2} \right)}[ a , f_1 , f_2,f ]^{p^{\alpha}}(\mod  \gamma_{5} ( F) ), 
\]
\[[ b^{p^{\alpha}} , f_3 , f_4 ]^f \equiv [ b , f_3 , f_4 ]^{p^{\alpha}}[ b , f_3,b , f_4 ]^{\left(^{p^{\alpha}}_{2} \right)}[ b, f_3 , f_4,f ]^{p^{\alpha}}(\mod  \gamma_{5} ( F) ) , 
\]
\[ \text{and}~[ [a, b]^{p^{\alpha}}, f_5 , f_6 ]^f \equiv [ a, b, f_5 , f_6 ]^{p^{\alpha}} \  \  (\mod \gamma_{5} ( F) )  \]
for all $ f_i,f\in F. $
Since $p>2,$  $[ a , f_1 , f_2 ]^{ p^{ \alpha } }\gamma _{5 } ( F)\in [ R , F , F ]/ \gamma _{5 } ( F).$ Therefore,
\[A= \{[ a , b , b ]^{ p^{ \alpha } } \gamma _{5 } ( F),  [ a, b , a]^{ p^{ \alpha } } \gamma _{5 } ( F), [ a , b , b,b ]^{ p^{ \alpha } } \gamma _{5 } ( F), \]\[ [ a , b , b,a ]^{ p^{ \alpha } } \gamma _{5 } ( F), [ a , b , a,a ]^{ p^{ \alpha } } \gamma _{5 } ( F) \}\] generates $ [ R , F , F ] /  \gamma _{5 } ( F). $ Since  $ \gamma_3(F) /  \gamma _{5 } ( F) $ is a free abelain group, it is easy to see that  $ A $ is linearly independent, and so $ A $ is a basis for $ [ R , F , F ] /  \gamma _{5 } ( F).$ Thus
\begin{equation*}
\begin{split}
[ R , F , F ] \equiv
& \langle [ a , b , b ]^{ p^{ \alpha } },  [ a, b , a]^{ p^{ \alpha } } , [ a , b , b,b ]^{ p^{ \alpha } },[ a , b , b,a ]^{ p^{ \alpha } }, [ a , b , a,a ]^{ p^{ \alpha } } \rangle
  ( \mod  \gamma _{5 } ( F) ).
\end{split}
\end{equation*}
Since $\gamma _{3 } ( F)/\gamma_{5}( F)\cong \mathbb{Z}^{ (5)} $ and   $[ R , F , F ]/\gamma_{5}( F)\cong  ({ p^{ \alpha}}\mathbb{Z})^5,$ we get  $ \mathcal{M}^{(2)}(G_1) \cong \mathbb{Z} _{ p^{ \alpha}} ^{ (5) }. $ This proof is complete.
\end{proof}
\begin{lem}\label{k2}
Let $G $ be the group $ G_2$ presented in Corollary \ref{2cap1}(a)$(ii).$ Then \[ \mathcal{M}^{(2)}(G) \cong \mathbb{Z}_{ p^{ \alpha}}^{ (2)}\oplus \mathbb{Z}_{ p^{ \gamma}}^{ (3) }. \]
\end{lem}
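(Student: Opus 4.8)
The plan is to mimic the structure of the proof of Lemma \ref{k3} almost verbatim, since $G_2$ differs from $G_1$ only in that the relator $[a,b]^{p^{\gamma}}$ replaces $[a,b]^{p^{\alpha}}$ (with $\alpha>\gamma$), while the power relations $a^{p^{\alpha}}=b^{p^{\alpha}}=1$ are unchanged. First I would take the free presentation $F/R$ with $F$ free on $\{a,b\}$ and $R=\langle a^{p^{\alpha}},\,b^{p^{\alpha}},\,[a,b]^{p^{\gamma}},\,[a,b,b],\,[a,b,a]\rangle^{F}$, and reduce as before to computing a basis of $[R,F,F]/\gamma_5(F)$ inside the free abelian group $\gamma_3(F)/\gamma_5(F)\cong\mathbb{Z}^{(5)}$ on the five basic commutators of weights $3$ and $4$.

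Next I would apply \cite[Lemma 3.3]{NP} to each of the three nontrivial generators of $R$ exactly as in Lemma \ref{k3}. The two contributions coming from $a^{p^{\alpha}}$ and $b^{p^{\alpha}}$ produce, modulo $\gamma_5(F)$, the elements $[a,b,a]^{p^{\alpha}}$ and $[a,b,b]^{p^{\alpha}}$ together with their weight-$4$ consequences; because $p>2$ the binomial coefficient $\binom{p^{\alpha}}{2}$ is divisible by $p^{\alpha}$, so the higher-weight correction terms lie in the subgroup generated by the $p^{\alpha}$-th powers and can be absorbed. The new ingredient is the third relator: $[[a,b]^{p^{\gamma}},f_5,f_6]^{f}\equiv[a,b,f_5,f_6]^{p^{\gamma}}\pmod{\gamma_5(F)}$, which contributes the three weight-$4$ elements $[a,b,b,b]^{p^{\gamma}}$, $[a,b,b,a]^{p^{\gamma}}$, $[a,b,a,a]^{p^{\gamma}}$ (and lower consequences already covered). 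The point is that since $\gamma>0$ and $\alpha>\gamma$, the $p^{\gamma}$-th powers on the three weight-$4$ generators dominate the $p^{\alpha}$-th powers coming from the first two relators, so the latter are redundant on those three coordinates.

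Collecting, I would argue that a generating set for $[R,F,F]/\gamma_5(F)$ is
\[
\{\,[a,b,a]^{p^{\alpha}},\,[a,b,b]^{p^{\alpha}},\,[a,b,b,b]^{p^{\gamma}},\,[a,b,b,a]^{p^{\gamma}},\,[a,b,a,a]^{p^{\gamma}}\,\}
\]
modulo $\gamma_5(F)$. Using that $\gamma_3(F)/\gamma_5(F)$ is free abelian, this set is linearly independent, hence a basis, giving $[R,F,F]/\gamma_5(F)\cong (p^{\alpha}\mathbb{Z})^{2}\oplus(p^{\gamma}\mathbb{Z})^{3}$ as a subgroup of $\mathbb{Z}^{(5)}$ aligned with the chosen basis. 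Taking the quotient yields $\mathcal{M}^{(2)}(G_2)\cong\mathbb{Z}_{p^{\alpha}}^{(2)}\oplus\mathbb{Z}_{p^{\gamma}}^{(3)}$, as claimed.

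The step I expect to be the main obstacle is bookkeeping the redundancy on the two weight-$3$ generators versus the three weight-$4$ generators: one must check carefully that the weight-$4$ consequences of the power relators $a^{p^{\alpha}},b^{p^{\alpha}}$ (the terms with exponent $p^{\alpha}$ on $[a,b,b,b]$, etc.) are genuinely absorbed by the $p^{\gamma}$-th power generators coming from $[a,b]^{p^{\gamma}}$ since $\alpha>\gamma$, and conversely that the weight-$3$ generators $[a,b,a]$ and $[a,b,b]$ receive only the exponent $p^{\alpha}$ (there is no weight-$3$ contribution from the commutator relator). Verifying that no further collapsing occurs, so that the two coordinates really have order $p^{\alpha}$ and not a smaller power, is where the hypothesis $\alpha>\gamma$ and the oddness of $p$ must both be used, and it is the only place the computation diverges substantively from Lemma \ref{k3}.
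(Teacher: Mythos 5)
Your proposal is correct and follows essentially the same route as the paper: the paper's proof simply cites ``commutator computations similar to Lemma~\ref{k3}'' and states the resulting generating set $\{[a,b,b]^{p^{\alpha}},[a,b,a]^{p^{\alpha}},[a,b,b,b]^{p^{\gamma}},[a,b,b,a]^{p^{\gamma}},[a,b,a,a]^{p^{\gamma}}\}$ for $[R,F,F]$ modulo $\gamma_5(F)$, which is exactly what you derive. Your write-up in fact supplies the absorption argument (the $\binom{p^{\alpha}}{2}$ divisibility for odd $p$ and the containment $p^{\alpha}\mathbb{Z}\subseteq p^{\gamma}\mathbb{Z}$ for $\alpha>\gamma$) that the paper leaves implicit.
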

\begin{proof}
Let $F/R$  be a free presentation for $G_2$ such that  $F$ is a free group on $\{ a, b\}$ and $R=\langle[ a , b ] ^{ p^{ \gamma}} , a ^{ p^{ \alpha }} , b ^{p^{ \alpha } }, [ a, b, b ],  [ a, b , a], \alpha  > \gamma\rangle^F.  $ By some commutator computations similarly to the proof of Lemma \ref{k3}, one can reach that
\begin{equation*}
\begin{split}
[ R , F , F ] \equiv
& \langle [ a , b , b ]^{ p^{ \alpha } },  [ a, b , a]^{ p^{ \alpha } } , [ a , b , b,b ]^{ p^{ \gamma } },[ a , b , b,a ]^{ p^{ \gamma } }, [ a , b , a, a ]^{ p^{ \gamma } } \rangle
  ( \mod
   \gamma_{5 } ( F) ).
\end{split}
\end{equation*}
Thus  $ \mathcal{M}^{(2)}(G_2) \cong \dfrac{ \gamma _{3 } ( F)/\gamma_{5}( F) }{[ R , F , F ]/\gamma_{5 } ( F) }\cong  \mathbb{Z}_{ p^{ \alpha}}^{ (2)}\oplus \mathbb{Z}_{ p^{ \gamma} }^{ (3) },$ as desired.

\end{proof}
\begin{lem}\label{k1}
Let $G $ be the group $G_3$ presented  in  Corollary \ref{2cap1}(a)$(iii).$
   Then \[
 \mathcal{M}^{(2)}(G) \cong
                                    \mathbb{Z} _{ p^{ \alpha}  }\oplus\mathbb{Z}_{ p^{ \beta}}\oplus \mathbb{Z}_{ p^{ \sigma} }^{ (3) }.\]
\end{lem}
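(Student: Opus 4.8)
The plan is to proceed exactly as in Lemmas \ref{k3} and \ref{k2}. I take the free presentation $F/R$ with $F$ free on $\{a,b\}$ and
\[R=\langle a^{p^{\alpha}},\,[a,b]^{p^{\gamma}},\,b^{p^{\beta}}[a,b]^{-p^{\sigma}},\,[a,b,b],\,[a,b,a]\rangle^{F},\]
and read off $\mathcal M^{(2)}(G_3)$ from a basis of $[R,F,F]/\gamma_5(F)$ sitting inside $\gamma_3(F)/\gamma_5(F)$, the free abelian group on the five basic commutators $[a,b,a],[a,b,b],[a,b,b,b],[a,b,a,a],[a,b,b,a]$. The relators $[a,b,b]$ and $[a,b,a]$ already lie in $\gamma_3(F)$, so $[[a,b,b],f_1,f_2],[[a,b,a],f_1,f_2]\in\gamma_5(F)$ and they contribute nothing, just as before.

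First I would expand the generators $[r,f_1,f_2]^{f}$ of $[R,F,F]$ modulo $\gamma_5(F)$ for $f_1,f_2,f\in\{a,b\}$ using \cite[Lemma 3.3]{NP}. For $a^{p^{\alpha}}$ and $[a,b]^{p^{\gamma}}$ this is literally the computation of Lemmas \ref{k3} and \ref{k2}, giving terms at exponent $p^{\alpha}$ and $p^{\gamma}$. The genuinely new ingredient is the mixed relator $r=b^{p^{\beta}}[a,b]^{-p^{\sigma}}$: since $[a,b]^{-p^{\sigma}}\in\gamma_2(F)$, the identity $[xy,z]=[x,z]^{y}[y,z]$ gives, modulo $\gamma_5(F)$,
\[[r,f_1,f_2]\equiv[b^{p^{\beta}},f_1,f_2]\,[a,b,f_1,f_2]^{-p^{\sigma}},\]
so $r$ feeds a $p^{\beta}$-multiple of the $b$-commutators (rewritten via $[b,a,a]\equiv[a,b,a]^{-1}$, $[b,a,b]\equiv[a,b,b]^{-1}$, and their weight-four analogues) and, simultaneously, a $p^{\sigma}$-multiple of the weight-four commutators. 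Here $p$ odd is used so that each $\binom{p^{k}}{2}$ is divisible by $p^{k}$ and every correction term sits at the same exponent as its leading term.

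Next I would record all generators as integer vectors in the five-element basis and peel off summands. Choosing $f_1=b$ in $r$ annihilates the $b$-part, so $[a,b,b,b]^{-p^{\sigma}}$ and $[a,b,b,a]^{-p^{\sigma}}$ lie in $[R,F,F]$; since in every generator the $[a,b,b,b]$- and $[a,b,b,a]$-coordinates are divisible by $p^{\sigma}$, these two directions split off as $\mathbb Z_{p^{\sigma}}\oplus\mathbb Z_{p^{\sigma}}$. Likewise the $[a,b,b]$-coordinate occurs only at $p^{\alpha}$ (from $a^{p^{\alpha}}$) and at $p^{\beta}$ (from $r$), with zero entries elsewhere, and as $\beta<\alpha$ this direction splits off as $\mathbb Z_{p^{\beta}}$. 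After discarding the redundant generators, what remains is the rank-two lattice $L''$ in the span of $[a,b,a]$ and $[a,b,a,a]$ generated by $(-p^{\beta},-p^{\sigma})$ (from the mixed relator) and $(0,p^{\gamma})$ (from $[a,b]^{p^{\gamma}}$), the $p^{\alpha}$-generators coming from $a^{p^{\alpha}}$ being absorbed into these.

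The crux is the Smith normal form of $\left(\begin{smallmatrix}-p^{\beta}&-p^{\sigma}\\[2pt]0&p^{\gamma}\end{smallmatrix}\right)$: its determinant has absolute value $p^{\beta+\gamma}$ and the gcd of its entries is $p^{\sigma}$ (as $\sigma<\gamma\le\beta$), so the invariant factors are $p^{\sigma}$ and $p^{\beta+\gamma-\sigma}$. At precisely this point the hypothesis $\alpha-\beta=\gamma-\sigma$ enters decisively, giving $\beta+\gamma-\sigma=\alpha$, so that $\mathbb Z^{2}/L''\cong\mathbb Z_{p^{\sigma}}\oplus\mathbb Z_{p^{\alpha}}$. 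Assembling all the summands then yields $\mathcal M^{(2)}(G_3)\cong\mathbb Z_{p^{\alpha}}\oplus\mathbb Z_{p^{\beta}}\oplus\mathbb Z_{p^{\sigma}}^{(3)}$. I expect the main obstacle to be the bookkeeping of the coupling between the weight-three generator $[a,b,a]$ and the weight-four generator $[a,b,a,a]$: unlike $G_1$ and $G_2$, here $[R,F,F]$ is not diagonal in the basic-commutator basis, and one must verify that the off-diagonal $p^{\sigma}$-entry produced by the mixed relator genuinely survives the reductions, so that the second invariant factor is exactly $p^{\alpha}$ rather than $p^{\beta}$ or $p^{\gamma}$.
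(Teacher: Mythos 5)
Your proposal is correct and follows essentially the same route as the paper's proof: the same free presentation, the same expansion of $[R,F,F]$ modulo $\gamma_5(F)$ via \cite[Lemma 3.3]{NP}, and then integer linear algebra on the resulting lattice inside the free abelian group $\gamma_3(F)/\gamma_5(F)$, with the hypothesis $\alpha-\beta=\gamma-\sigma$ entering at exactly the same point. The only differences are organizational (you split off coordinates and finish with a $2\times 2$ Smith normal form, while the paper exhibits a basis of $\gamma_3(F)/\gamma_5(F)$ adapted to $[R,F,F]$ so the quotient reads off as a direct sum); in fact your bookkeeping of the coupled pair $[a,b,a]$, $[a,b,a,a]$ is the accurate one, since the paper's printed coupled generator $[a,b,b]^{-p^{\beta}}[a,b,a,a]^{-p^{\sigma}}$ misprints the weight-three factor (commutating the mixed relator with $a,a$ produces $[a,b,a]^{-p^{\beta}}[a,b,a,a]^{-p^{\sigma}}$), a slip that does not affect the paper's final answer.
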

\begin{proof}
Let $F/R$  be a free presentation for $G_3$ such that  $F$ is a free group on $\{ a, b\}$ and $R=\langle[ a , b ] ^{ p^{ \gamma}} , a ^{ p^{ \alpha }} , [ a, b, b ], [ a, b , a] ,  b ^{p^{ \beta } } [ a , b ] ^{ p^{ -\sigma}}  \rangle^F.$  
One can easily check that $[ R , F , F ]$ is generated by the following set
\[\{[ a , b , f_1,f_2 ]^{ p^{ \gamma } }\gamma _{5 } ( F) , [a,f_3,f_4]^{ p^{ \alpha }}\gamma _{5 } ( F), [b ,f_5,f_6]^{p^{ \beta } }[ a , b  , f_5,f_6]^{ p^{ -\sigma}}\gamma _{5 } ( F) \]\[ \mid f_i\in F,1\leq i\leq 6\}.\] Similarly to the proof of Lemma \ref{k3},
  $ [ R , F , F ]/\gamma_{5 } ( F)$ is generated by the elements $ [a,b,a]^{ p^{ \alpha }}\gamma_{5 } ( F), $ $ [a,b,b]^{ p^{ \beta }}\gamma_{5 } ( F),$ $[a ,b,b]^{p^{- \beta } } [ a , b  ,a ,a]^{ p^{ -\sigma}}\gamma_{5 } ( F), \  [ a, b, b,b]^{ p^{ -\sigma}}\gamma_{5 } ( F),$  and $[ a, b , b,a]^{ p^{ -\sigma}}\gamma_{5 } ( F).$  
Therefore,  the following set \[ \{[ a, b, b ]^{ p^{ \beta } },  [ a, b , a]^{ p^{ \alpha } } , [ a, b, b,b ]^{ p^{ \sigma } },[ a, b, b, a ]^{ p^{ \sigma } }, [ a, b, b ]^{ p^{ \beta } } [ a, b, a,a ]^{ p^{ \sigma } }\} \]
is a basis for the group $[ R , F , F ]$ in modulo $\gamma _{5 } ( F). $
By easy  calculations,    the set $ \{[ a, b, b ],  [ a, b , a] , [ a, b, b,b ],[ a, b, b, a ], [ a, b, b ]^{p^{\beta-\sigma}}[ a, b, a,a ]\}$ is also a basis for the free abelian group $\gamma_3 (F)/\gamma_5(F).$ Thus
\begin{align*}
\mathcal{M}^{(2)}(G_3) &\cong \dfrac{ \gamma _{3 } ( F)/\gamma_{5}( F) }{[ R , F , F ]/\gamma_{5 } ( F) }\cong  \dfrac{[ a, b, b ]}{[ a, b, b ]^{ p^{ \beta } }}\oplus \dfrac{[ a, b , a]}{[ a, b , a]^{ p^{ \alpha } }}
\oplus \dfrac{[ a, b, b,b ]}{[ a, b, b,b ]^{ p^{ \sigma } }}\\&\oplus \dfrac{[ a, b, b, a ]}{[ a, b, b, a ]^{ p^{ \sigma } }}\oplus \dfrac{ [ a, b, b ]^{ p^{ \beta - \sigma} } [ a, b, a,a ]}{([ a, b, b ]^{ p^{ \beta - \sigma} } [ a, b, a,a ])^{ p^{ \sigma } }}\cong 
                                    \mathbb{Z} _{ p^{ \alpha}  }\oplus\mathbb{Z}_{ p^{ \beta}}\oplus \mathbb{Z}_{ p^{ \sigma} }^{ (3) } ,
\end{align*}
 as required.
\end{proof}
Now, we are ready to prove Theorem A.
\begin{p a} The result is obtained by Lemmas \ref{k3}, \ref{k2}, and \ref{k1}.
\end{p a}
\section{ $2$-capability of some $ p$-groups } 
This section is devoted  to determine all $ 2 $-capable  finite $ 2 $-generator $ p$-groups of class two for odd prime $ p.$

The following results are useful for determining the $2$-capability of groups.
\begin{lem} \cite[Theorem 4.4]{MK} and \cite[Lemma 2.1$(vii)$]{BE}\label{normal}
  Let $ N $ be a normal subgroup of a group $ G $ contained in  $Z_2( G). $ Then 
  \begin{itemize}
  \item[$(i)$]$ N  \subseteq Z_2^{ *} (G) $ if and only if the natural map $ \mathcal{M}^{(2)}(G) \longrightarrow  \mathcal{M}^{(2)} ( G / N ) $ is a monomorphism.
 \item[$(ii)$]The sequence $ \mathcal{M}^{(2)}(G) \longrightarrow  \mathcal{M}^{(2)} ( G / N )\longrightarrow N\cap \gamma_3(G)\longrightarrow 1 $ is exact.
  \end{itemize}
    \end{lem}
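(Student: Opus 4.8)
The plan is to fix a free presentation $G = F/R$ and to describe every object in terms of subgroups of $F$. Writing $S$ for the preimage of $N$ in $F$, so that $R \subseteq S \subseteq F$, $N = S/R$ and $G/N \cong F/S$, the hypothesis $N \subseteq Z_2(G)$ translates into the single commutator inclusion $[S,F,F] \subseteq R$; since also $[S,F,F] \subseteq \gamma_3(F)$, this gives $[S,F,F] \subseteq R \cap \gamma_3(F)$, the fact that makes all the maps below well defined. Both multipliers then read off the presentation as
\[\mathcal{M}^{(2)}(G) \cong \frac{R \cap \gamma_3(F)}{[R,F,F]}, \qquad \mathcal{M}^{(2)}(G/N) \cong \frac{S \cap \gamma_3(F)}{[S,F,F]},\]
with the natural map induced by the inclusion $R \subseteq S$.

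For part $(ii)$, first I would define the second map $\mathcal{M}^{(2)}(G/N) \to N \cap \gamma_3(G)$ by $s[S,F,F] \mapsto sR$ for $s \in S \cap \gamma_3(F)$; it is well defined precisely because $[S,F,F] \subseteq R$, and its image is exactly $N \cap \gamma_3(G) = (S \cap \gamma_3(F)R)/R$, which yields surjectivity on the right. The remaining point is exactness in the middle: a direct computation identifies both the image of $\mathcal{M}^{(2)}(G) \to \mathcal{M}^{(2)}(G/N)$ and the kernel of $\mathcal{M}^{(2)}(G/N) \to N \cap \gamma_3(G)$ with the same subgroup $(R \cap \gamma_3(F))/[S,F,F]$ of $\mathcal{M}^{(2)}(G/N)$, where once more $[S,F,F] \subseteq R \cap \gamma_3(F)$ is used. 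This part is pure commutator bookkeeping inside $\gamma_3(F)$ and presents no real obstacle.

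For part $(i)$, the same bookkeeping shows that the kernel of $\mathcal{M}^{(2)}(G) \to \mathcal{M}^{(2)}(G/N)$ equals $[S,F,F]/[R,F,F]$, so that the natural map is a monomorphism if and only if $[S,F,F] = [R,F,F]$ (note $[R,F,F] \subseteq [S,F,F]$ always holds). It then remains to establish the equivalence
\[N \subseteq Z_2^*(G) \iff [S,F,F] = [R,F,F].\]
Here I would invoke a presentation description of the $2$-epicenter: the group $F/[R,F,F]$ is a universal object whose second centre contains $R/[R,F,F]$, and $Z_2^*(G)$ is the image in $G$ of the part of $R$ forced into the second centre of $F/[R,F,F]$. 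Unwinding the minimality in the definition of $Z_2^*(G)$ as the smallest subgroup of $Z_2(G)$ with $2$-capable quotient, a second-centre subgroup $N = S/R$ lies in $Z_2^*(G)$ exactly when passing to $G/N$ does not enlarge the commutator subgroup $[R,F,F]$, that is, when $[S,F,F] = [R,F,F]$.

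The main obstacle is this last equivalence in part $(i)$: matching the elementary condition $[S,F,F] = [R,F,F]$ to the intrinsic, presentation-free minimality property defining $Z_2^*(G)$. The delicate points are that the identification must not depend on the chosen free presentation $F/R$, and that one must genuinely exhibit (or invoke the existence of) a group $H$ with $H/Z_2(H) \cong G/N$ realising $2$-capability. The homological half, part $(ii)$ together with the kernel computation, is the routine ingredient that feeds into this final step.
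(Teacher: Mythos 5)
First, a point of context: the paper does not prove this lemma at all --- it is imported verbatim from \cite[Theorem 4.4]{MK} and \cite[Lemma 2.1$(vii)$]{BE} --- so your reconstruction can only be measured against those sources, and in outline it does follow their method (free presentation, the formula $\mathcal{M}^{(2)}(-)\cong (R\cap\gamma_3(F))/[R,F,F]$, translation of everything into subgroups of $F$). Within that frame, your part $(ii)$ is correct and complete: the hypothesis $N\subseteq Z_2(G)$ does give $[S,F,F]\subseteq R\cap\gamma_3(F)$, Dedekind's law identifies $N\cap\gamma_3(G)$ with $(S\cap\gamma_3(F))R/R$, and both the image of the first map and the kernel of the second map equal $(R\cap\gamma_3(F))/[S,F,F]$. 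Your kernel computation in part $(i)$ is also right, so $(i)$ genuinely reduces to the equivalence $N\subseteq Z_2^{*}(G)\Longleftrightarrow [S,F,F]=[R,F,F]$.

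That equivalence, however, is a real gap, and not merely because you flag it as an obstacle. First, the description of the $2$-epicenter you invoke is misstated: ``the image in $G$ of the part of $R$ forced into the second centre of $F/[R,F,F]$'' is trivial, since every element of $R$ maps to the identity of $G=F/R$. The correct statement --- and this \emph{is} the content of the cited \cite[Theorem 4.4]{MK} --- is that $Z_2^{*}(G)=\tau\bigl(Z_2(F/[R,F,F])\bigr)$, where $\tau\colon F/[R,F,F]\to G$ is the natural epimorphism; equivalently $Z_2^{*}(G)=K/R$, where $K=\{f\in F : [f,F,F]\subseteq [R,F,F]\}$ contains $R$. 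Granting this, $(i)$ is immediate, since $S/R\subseteq K/R$ iff $[S,F,F]\subseteq[R,F,F]$. Second, this description is not a formal ``unwinding of minimality'': the minimality definition by itself says nothing about which subgroups lie \emph{inside} $Z_2^{*}(G)$. One direction of the characterization is easy, because $G/(K/R)\cong (F/[R,F,F])/Z_2(F/[R,F,F])$ is visibly $2$-capable; the other direction requires that for every normal $N$ with $G/N\cong H/Z_2(H)$ one lifts $F\to G/N$ through $H$ (using freeness of $F$), notes that the preimage $T$ of $N$ in $F$ is $\phi^{-1}(Z_2(H))$ for the lift $\phi$, and proves $\phi(K)\subseteq Z_2(H)$ --- which, since $\phi$ need not be surjective but only satisfies $H=\phi(F)Z_2(H)$, takes a three-subgroups-lemma argument; this is also exactly where independence of the chosen presentation is settled. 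Your proposal names these issues but carries out none of them, so as written part $(i)$ rests on an unproved, and as stated incorrect, characterization of $Z_2^{*}(G)$.
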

An immediate result of Lemma \ref{normal} is as follows:
    \begin{cor}\label{normal2}
   Let $G$ be a group of nilpotency class two and  $ N $ be a normal subgroup of $ G. $  Then  $ N  \subseteq Z_2^{ *} (G) $ if and only if $ \mathcal{M}^{(2)}(G) \cong  \mathcal{M}^{(2)} ( G / N ).$
    \end{cor}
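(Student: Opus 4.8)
The plan is to derive the corollary directly from the two parts of Lemma \ref{normal}, after observing that the class-two hypothesis on $G$ forces the technical assumptions of that lemma to hold automatically. First I would note that if $G$ has nilpotency class two then $\gamma_3(G)=1$, whence $G'\subseteq Z(G)$ and $Z_2(G)=G$. Consequently any normal subgroup $N$ of $G$ lies in $Z_2(G)$, so Lemma \ref{normal} applies to $N$; moreover $N\cap\gamma_3(G)=N\cap 1=1$.

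With $N\cap\gamma_3(G)=1$, part $(ii)$ of Lemma \ref{normal} yields an exact sequence $\mathcal{M}^{(2)}(G)\to\mathcal{M}^{(2)}(G/N)\to 1\to 1$, so the natural map $\theta\colon\mathcal{M}^{(2)}(G)\to\mathcal{M}^{(2)}(G/N)$ is always surjective. Thus the only thing separating $\theta$ from being an isomorphism is injectivity, and by part $(i)$ of Lemma \ref{normal} injectivity of $\theta$ is exactly equivalent to the condition $N\subseteq Z_2^*(G)$.

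Combining these observations, if $N\subseteq Z_2^*(G)$ then $\theta$ is both injective and surjective, hence an isomorphism, giving $\mathcal{M}^{(2)}(G)\cong\mathcal{M}^{(2)}(G/N)$. For the converse I would use finiteness: since $\theta$ is a surjection of finite abelian groups and an isomorphism $\mathcal{M}^{(2)}(G)\cong\mathcal{M}^{(2)}(G/N)$ forces $|\mathcal{M}^{(2)}(G)|=|\mathcal{M}^{(2)}(G/N)|$, the surjection $\theta$ must also be injective, so again by part $(i)$ of Lemma \ref{normal} we conclude $N\subseteq Z_2^*(G)$.

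The step I expect to be the main (indeed essentially the only) obstacle is this last comparison of orders in the converse direction: passing from an abstract isomorphism $\mathcal{M}^{(2)}(G)\cong\mathcal{M}^{(2)}(G/N)$ back to injectivity of the specific natural map $\theta$ is legitimate only because the multipliers involved are finite, so that a surjection between groups of equal order is automatically bijective. In the setting of this paper all groups in sight are finite $p$-groups, so this presents no difficulty, but it is the one place where the argument appeals to more than formal exactness.
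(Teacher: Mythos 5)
Your proof is correct and is exactly the argument the paper intends: the paper offers no written proof, presenting the corollary as an immediate consequence of Lemma \ref{normal}, and your derivation (class two forces $Z_2(G)=G$ and $N\cap\gamma_3(G)=1$, so part $(ii)$ makes the natural map $\mathcal{M}^{(2)}(G)\to\mathcal{M}^{(2)}(G/N)$ surjective, while part $(i)$ converts $N\subseteq Z_2^*(G)$ into injectivity of that same map) is precisely that argument spelled out. Your closing caveat is also apt: the corollary is stated for arbitrary class-two groups, but the converse direction genuinely needs the multipliers to be finite (or at least Hopfian, as holds for finitely generated class-two groups), and since the paper only ever applies the corollary to finite $p$-groups this gap in generality is harmless.
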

    
Note that,  the $2$-capability implies  that the capability. For this,  we just need to discuss the $2$-capability of  groups in Corollary \ref{2cap1}(a).
  \begin{thm}\label{ck3}
Let $G$ be a $p$-group such that $ G\cong G_1 $ or $ G\cong G_2 $ with the presentations as in Corollary \ref{2cap1}. Then $G$ is $2$-capable.
\end{thm}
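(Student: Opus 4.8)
The plan is to prove $2$-capability directly, by showing that the $2$-epicenter $Z_2^*(G)$ is trivial. Since $Z_2^*(G)$ is a characteristic subgroup of the finite $p$-group $G$, if it were nontrivial it would meet $Z(G)$ nontrivially and hence contain a central subgroup $N$ of order $p$. As $G$ has class two we have $\gamma_3(G)=1$, so the term $N\cap\gamma_3(G)$ in Lemma \ref{normal}(ii) vanishes and the natural map $\mathcal{M}^{(2)}(G)\to\mathcal{M}^{(2)}(G/N)$ is surjective; a surjection of finite abelian groups is injective exactly when the two orders agree. By Lemma \ref{normal}(i) (equivalently Corollary \ref{normal2}), injectivity is in turn equivalent to $N\subseteq Z_2^*(G)$. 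Hence it suffices to prove that \emph{every} central subgroup $N$ of order $p$ satisfies $|\mathcal{M}^{(2)}(G/N)|<|\mathcal{M}^{(2)}(G)|$: no such $N$ can then lie in $Z_2^*(G)$, forcing $Z_2^*(G)=1$.

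For $G\cong G_1$ the combinatorics collapses. One checks that $Z(G_1)=\langle[a,b]\rangle$ is cyclic of order $p^{\alpha}$, so $G_1$ has a \emph{unique} central subgroup of order $p$, namely $N=\langle[a,b]^{p^{\alpha-1}}\rangle$. Passing to the quotient gives $G_1/N\cong G_p(\alpha,\alpha,\alpha-1;0,0)$, a group of type $G_2$, so by Theorem A (Lemma \ref{k2}) $\mathcal{M}^{(2)}(G_1/N)\cong\mathbb{Z}_{p^{\alpha}}^{(2)}\oplus\mathbb{Z}_{p^{\alpha-1}}^{(3)}$, of order $p^{5\alpha-3}$. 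Since $|\mathcal{M}^{(2)}(G_1)|=p^{5\alpha}$ by Lemma \ref{k3}, the orders differ and the reduction above yields $Z_2^*(G_1)=1$.

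For $G\cong G_2$ the center is larger: a direct computation gives $Z(G_2)=\langle a^{p^{\gamma}}\rangle\oplus\langle b^{p^{\gamma}}\rangle\oplus\langle[a,b]\rangle\cong\mathbb{Z}_{p^{\alpha-\gamma}}^{2}\oplus\mathbb{Z}_{p^{\gamma}}$, whose socle is $\langle a^{p^{\alpha-1}},\,b^{p^{\alpha-1}},\,[a,b]^{p^{\gamma-1}}\rangle\cong\mathbb{Z}_p^{3}$. I would then run through the central subgroups $N=\langle a^{p^{\alpha-1}s}b^{p^{\alpha-1}t}[a,b]^{p^{\gamma-1}u}\rangle$ with $(s,t,u)\neq(0,0,0)$. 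Using the symmetry $a\leftrightarrow b$ of the presentation these reduce to a few essential cases (the pure commutator direction, a single generator direction, and a mixed diagonal direction); in each case $G_2/N$ is again a $2$-generator $p$-group of class at most two, which I identify inside the classification of Theorem \ref{pres} and whose $2$-nilpotent multiplier I compute by the basic-commutator method of Section $2$ (for the non-capable quotients this is precisely the computation carried out in Section $4$). The point to verify is that in every case $|\mathcal{M}^{(2)}(G_2/N)|<p^{2\alpha+3\gamma}=|\mathcal{M}^{(2)}(G_2)|$; for instance $N=\langle[a,b]^{p^{\gamma-1}}\rangle$ gives $G_2(\alpha,\alpha,\gamma-1)$ with multiplier order $p^{2\alpha+3(\gamma-1)}$, a strict drop. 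Concluding as in the $G_1$ case then gives $Z_2^*(G_2)=1$.

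The routine parts are the two center computations and the identification of $G_1/N$ as a group of type $G_2$. The genuine obstacle is the $G_2$ case: one must control \emph{all} central order-$p$ subgroups at once, and since several of the quotients $G_2/N$ are not capable, their $2$-nilpotent multipliers are not covered by Theorem A and must be computed directly, while the mixed-direction subgroups first require a change of generators before they can be matched to the classification. The main thing to confirm is that the strict inequality on multiplier orders holds \emph{uniformly} over every such subgroup.
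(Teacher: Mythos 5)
Your reduction framework is sound, and your $G_1$ argument is essentially complete and even a bit slicker than the paper's: $Z(G_1)=\langle [a,b]\rangle$ is cyclic, so there is a unique minimal central subgroup $N=\langle [a,b]^{p^{\alpha-1}}\rangle$, the quotient is of type $G_2$, and Lemmas \ref{k3}, \ref{k2} together with Lemma \ref{normal}$(i)$ give the contradiction. (Two small repairs: the quotient is the tuple $(\alpha,\alpha,\alpha-1;\alpha-1,\alpha-1)$, not $(\alpha,\alpha,\alpha-1;0,0)$, and when $\alpha=1$ the quotient is abelian, so you need $\mathcal{M}^{(2)}(\mathbb{Z}_p^{(2)})\cong\mathbb{Z}_p^{(2)}$ from \cite[Theorem 2.3]{NP} rather than Lemma \ref{k2}.) The paper takes a different and shorter route that you should note: by \cite[Theorem 1.3]{BE} the homocyclic group $G/G'\cong\mathbb{Z}_{p^\alpha}\times\mathbb{Z}_{p^\alpha}$ is $2$-capable, hence $Z_2^*(G)\subseteq G'=\langle [a,b]\rangle$; every candidate subgroup is then $\langle [a,b]^{p^s}\rangle$, every quotient is abelian or again of type $G_2$, and Theorem A plus \cite[Theorem 2.3]{NP} settle $G_1$ and $G_2$ simultaneously. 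That single confinement step is exactly what your proposal lacks, and it is why the paper never meets a noncapable quotient.

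The genuine gap is your $G_2$ case, which you flag but do not close. Since the socle of $Z(G_2)$ is $\mathbb{Z}_p^{(3)}$, your method must handle all $p^2+p+1$ minimal central subgroups. The change-of-generators reductions do work for odd $p$ (using $(a^sb^t)^{p^{\alpha-1}}=a^{sp^{\alpha-1}}b^{tp^{\alpha-1}}$, valid because $\alpha-1\geq\gamma$), and they leave three quotient types: $(\alpha,\alpha,\gamma-1;\gamma-1,\gamma-1)$ (type $G_2$, or abelian when $\gamma=1$), $(\alpha,\alpha-1,\gamma;\gamma,\gamma-1)$ (type $G_3$, covered by Lemma \ref{k1}), and $(\alpha,\alpha-1,\gamma;\gamma,\gamma)$, which is \emph{noncapable}, so Theorem A says nothing about it. Your fallback for this case --- that ``this is precisely the computation carried out in Section $4$'' --- is circular: Theorem \ref{uu} rests on Theorem \ref{nb}, which invokes Theorem E, which invokes Theorem B and Corollary D, which rest on the very Theorem \ref{ck3} you are proving. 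So you would have to redo that multiplier computation from scratch by the basic-commutator method (it yields $\mathbb{Z}_{p^{\alpha-1}}^{(2)}\oplus\mathbb{Z}_{p^{\gamma}}^{(3)}$, of order $p^{2\alpha-2+3\gamma}$, so the strict drop does hold), or else adopt the Burns--Ellis reduction, under which this case never arises. As written, the hardest case of your proof is deferred to ``the main thing to confirm,'' which leaves the argument incomplete precisely where it is nontrivial.
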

\begin{proof}
Assume that $ G\cong G_1. $ From \cite[Theorem 1.3]{BE},  $ G_1/G_1'$ is  $2$-capable, and so $ Z_{2}^{ * } (G_1) \subseteq G_1'=\langle [a,b]\rangle.$ Let $ x $ be a nontrivial element  of  $ Z_{2}^{ * } (G_1) .$ If 
$ \langle x\rangle=\langle [a,b]^r\rangle$  with  $gcd( p, r)=1,$ then $ \langle x\rangle=G'. $ Using \cite[Theorem  2.3]{NP} and Lemma \ref{k3}, $ \mathcal{M}^{(2)} ( G_1/G_1' ) \cong \mathbb{Z}_{ p^{ \alpha}} ^{ (2) }$ and
 $  \mathcal{M}^{ (2) } (G_1) \cong \mathbb{Z}_{p^{ \alpha }}^{ (5) }.$ Therefore,  we will have a contradiction  by Lemma \ref{normal}$ (i).$ 
 Now, let  $gcd( p , r)\neq 1.$ Then $ r=p^s t$ for some $ s $ with $ 1\leq s< \alpha $ and $gcd( p , t)= 1.$  Hence  $ \langle x\rangle= \langle [a,b]^{p^s}\rangle.$  We denote the image of $ y\in G_1 $ in  $  G_1/\langle x\rangle$ by $\overline{y}.$   Thus
\begin{center}
  $ G_1 /\langle x\rangle = \langle \overline{a}, \overline{ b} \mid \overline{a}^{ p^{ \alpha }} = \overline{b}^{ p^{ \alpha }} = [ \overline{a} , \overline{b} ] ^{ p^{s}} = [ \overline{a} , \overline{b} , \overline{a} ] = [ \overline{a} , \overline{b} , \overline{b} ] =1,\alpha>s\rangle\cong G_2.  $
\end{center} 
By Lemmas \ref{k3} and \ref{k2},  $ \vert \mathcal{M}^{ (2) } (G_1) \vert >  \vert \mathcal{M}^{ (2) } (G_1/\langle x\rangle)  \vert,$  which is a contradiction,  using Lemma \ref{normal}$ (i).$  So, $ Z_{2}^{ * } (G_1)=1,$  and the result follows in this case. Now, let  $ G\cong G_2. $  By a similar way, we get $ Z_{2}^{ * } (G_2)=1,$ as desirable.
  \end{proof} 
  The following lemma is an essential key in the proof of Theorem \ref{ck11}.
  \begin{lem}\label{ck1}
Let $G$ be the group $G_3$ presented in Corollary \ref{2cap1} $(iii).$ Assume that   $1\neq d = a^{ i' p^{ \beta+k_1 }}b ^{ j' p^{ \beta+k_2 }}  $ and $ gcd( p , i ' ) = gcd(p , j ' ) = 1, $ where $ k_1 $ and $ k_2 $ are integers. Then the following results hold:
\begin{itemize}
\item[$  (i)$]$ k_1=k_2=k, $  $ d = a^{ i' p^{ \beta+k }}b ^{ j' p^{ \beta+k}},  $ and  $k<\alpha-\beta.$
\item[$  (ii)$]$|d|=   p^{ \alpha-\beta-k}.$  
\item[$  (iii)$]$ G/  \langle d \rangle\cong \langle a_{ 1} , b_{ 1} \ \vert 
\ a_{ 1} ^{ p^{ \alpha }} = b_{ 1} ^{ p^{ \alpha }}  = [ a_{ 1} , b_{ 1} ] ^{ p^{ \gamma } } = [ a_1, b_1, b_1 ]= [ a_1, b_1 , a_1] = 1 , $  $b_{ 1} ^{ p^{ \beta }} = [ a_{ 1} , b_{ 1} ] ^{ p^{ \sigma }} , a_{1} ^{p^{ \beta + k }} = b_{1} ^{ -  p^{ \beta + k }},\beta + k< \alpha \rangle .$
\item[$  (iv)$] $
 \mathcal{M}^{(2)}(G / \langle d \rangle ) \cong 
                                     \mathbb{Z}_{p^{ \beta +k }} \oplus \mathbb{Z}_{p^{ \beta }} \oplus \mathbb{Z}_{p^{ \sigma }}^{ ( 3) }.$
\end{itemize}
\end{lem}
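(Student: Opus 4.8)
The plan is to work throughout in the normal form for $G = G_3$. Writing $c = [a,b]$, the class-two hypothesis makes $c$ central with $c^{p^\gamma}=1$, and the defining relations give $a^{p^\alpha}=1$ and $b^{p^\beta}=c^{p^\sigma}$; together with $\alpha-\beta=\gamma-\sigma$ this forces $|a|=|b|=p^\alpha$ and $|c|=p^\gamma$, so every element is uniquely $a^x b^y c^z$ with $0\le x<p^\alpha$, $0\le y<p^\beta$, $0\le z<p^\gamma$. The single computational engine for all four parts is the class-two power identity $(uv)^n = u^n v^n [v,u]^{\binom{n}{2}}$, which I would apply with $u=a^{i'p^{\beta+k_1}}$ and $v=b^{j'p^{\beta+k_2}}$, using $[v,u]=c^{-i'j'p^{2\beta+k_1+k_2}}$ and $b^{p^{\beta+k}}=c^{p^{\sigma+k}}$.

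For parts $(i)$ and $(ii)$ I would expand $d^n$ into normal form. The $a$-component of $d^n$ is $a^{ni'p^{\beta+k_1}}$, which, as $\gcd(i',p)=1$, vanishes precisely when the $p$-adic valuation $v_p(n)\ge\alpha-\beta-k_1$; a parallel analysis of the $b$-component, reducing $b$-powers that are multiples of $p^\beta$ to powers of $c$ via $b^{p^\beta}=c^{p^\sigma}$, controls its contribution. The point where care is needed is that for $d$ to be a genuine nontrivial element of the stated reduced shape the two generators must enter with the same $p$-valuation, i.e. $k_1=k_2=:k$; this equality is the linchpin of $(i)$, and I expect to prove it by showing that any mismatch either makes one of $i',j'$ divisible by $p$ after reduction through $b^{p^\beta}=c^{p^\sigma}$ or makes $d$ trivial, contradicting the hypotheses. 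With $k_1=k_2=k$ in hand, the threshold $v_p(n)\ge\alpha-\beta-k$ gives $|d|=p^{\alpha-\beta-k}$ once I check that at $n=p^{\alpha-\beta-k}$ the surviving central exponent $nj'p^{\sigma+k}-i'j'p^{2\beta+2k}\binom{n}{2}$ is divisible by $p^\gamma$: the first term equals $j'p^{\gamma}$ since $\alpha-\beta+\sigma=\gamma$, and the second has valuation $\alpha+\beta+k\ge\gamma$, where for the binomial coefficient I use that $p$ is odd so $v_p(\binom{n}{2})=v_p(n)$. Nontriviality $d\neq1$ forces $\alpha-\beta-k\ge1$, i.e. $k<\alpha-\beta$, completing $(i)$ and $(ii)$.

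For part $(iii)$ I would present $G/\langle d\rangle$ by adjoining the relation $d=1$, which with $k_1=k_2=k$ reads $a^{i'p^{\beta+k}}b^{j'p^{\beta+k}}=1$. Because $\gcd(i',p)=\gcd(j',p)=1$, the coefficients $i',j'$ are units modulo the relevant power of $p$, so after replacing $a,b$ by suitable powers $a_1,b_1$ (a change of generating set preserving all the class-two relations and the orders) the new relation becomes $a_1^{p^{\beta+k}}=b_1^{-p^{\beta+k}}$. I would then verify that $[a_1,b_1]^{p^\gamma}=1$, $a_1^{p^\alpha}=b_1^{p^\alpha}=1$ and $b_1^{p^\beta}=[a_1,b_1]^{p^\sigma}$ persist, and that $\beta+k<\alpha$ by $(i)$, yielding exactly the stated presentation.

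Finally, part $(iv)$ is a $2$-nilpotent multiplier computation by the method of Section 2: writing $G/\langle d\rangle=F/S$ with $F$ free on $\{a,b\}$, I would expand the commutators of each relator modulo $\gamma_5(F)$ using \cite[Lemma 3.3]{NP}, exactly as in Lemma \ref{k1}, to produce a generating set for $[S,F,F]/\gamma_5(F)$, extract a basis, and match it against the basis of basic commutators of weights $3$ and $4$ for the free abelian group $\gamma_3(F)/\gamma_5(F)$ to read off the invariant factors $\mathbb{Z}_{p^{\beta+k}}\oplus\mathbb{Z}_{p^\beta}\oplus\mathbb{Z}_{p^\sigma}^{(3)}$. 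The main obstacle throughout is the bookkeeping: the new relator $a^{p^{\beta+k}}b^{p^{\beta+k}}$ mixes both generators, so its iterated commutators expand into several basic commutators at once, and isolating the combination that produces the $\mathbb{Z}_{p^{\beta+k}}$ summand, rather than the $\mathbb{Z}_{p^\alpha}$ summand present for $G_3$ itself in Lemma \ref{k1}, while simultaneously handling the relator $b^{p^\beta}[a,b]^{-p^\sigma}$, is the delicate step; this shrinkage from $p^\alpha$ to $p^{\beta+k}$ is precisely what later makes the lemma useful for deciding $2$-capability.
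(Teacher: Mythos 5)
Your plan for parts $(i)$--$(iii)$ is workable in outline, but part $(i)$ is not actually proved: ``I expect to prove it by showing that any mismatch\dots'' is a hope, not an argument, and the mismatch analysis is genuinely delicate because powers of $b$ beyond $p^{\beta}$ collapse into central powers of $[a,b]$ (via $b^{p^{\beta}}=[a,b]^{p^{\sigma}}$), so one factor of $d$ can simply vanish in normal form. The paper settles $k_1=k_2$ differently: in $G/\langle d\rangle$ the images of $a^{i'p^{\beta+k_1}}$ and $b^{-j'p^{\beta+k_2}}$ are equal, hence have the same order, and comparing these orders using $\alpha-\beta=\gamma-\sigma$ forces $k_1=k_2$; the bound $k<\alpha-\beta$ then follows because $\beta+k\geq\alpha$ would give $G/\langle d\rangle\cong G$ and hence $d=1$, a contradiction. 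For $(ii)$ the paper uses the description $Z(G)=\langle a^{p^{\gamma}},[a,b],b^{p^{\gamma}}\rangle$ from \cite{MM}, and for $(iii)$ it normalizes $i'=j'=1$ and cites \cite[Proposition 3.1]{AMM}, which is close to what you propose.

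The serious gap is part $(iv)$, which is the substance of the lemma. You propose to redo the Section 2 free-presentation computation for $H=G/\langle d\rangle$, and you yourself flag the hard step --- isolating the $\mathbb{Z}_{p^{\beta+k}}$ invariant factor produced by the mixed relator $a_1^{p^{\beta+k}}b_1^{p^{\beta+k}}$ while simultaneously handling $b_1^{p^{\beta}}[a_1,b_1]^{-p^{\sigma}}$ --- but you never carry it out, so the conclusion is asserted rather than derived. The paper never confronts that computation at all: it invokes \cite[Proposition 1.1]{bey} to conclude $a_1^{p^{\beta+k}}\in Z^{*}(H)\subseteq Z_{2}^{*}(H)$, so by Corollary \ref{normal2} the $2$-nilpotent multiplier is unchanged upon passing to $H_1=H/\langle a_1^{p^{\beta+k}}\rangle$; and $H_1$ is again a group of type $G_3$, with parameters $(\beta+k,\beta,\sigma+k;\sigma)$ still satisfying the capability condition $\alpha'-\beta'=\gamma'-\sigma'$, so Lemma \ref{k1} applies verbatim and yields $\mathbb{Z}_{p^{\beta+k}}\oplus\mathbb{Z}_{p^{\beta}}\oplus\mathbb{Z}_{p^{\sigma}}^{(3)}$. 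That structural reduction --- kill an epicentral element so as to land back in the family whose multiplier is already computed --- is exactly the idea your proposal is missing; without it you would have to complete the commutator bookkeeping you deferred, and until that is done part $(iv)$ has no proof.
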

\begin{proof}
Consider the image of $ y\in G $ in  $  G/ \langle d \rangle$ by $ \tilde{y}. $ As a result,
\begin{align*}
G/  \langle d \rangle= & \langle \tilde{a} , \tilde{b} \mid
 \tilde{a} ^{ p^{ \alpha }} = [ \tilde{a} , \tilde{b} ] ^{ p^{ \gamma } } = 1 , \tilde{b} ^{ p^{ \beta }} = [ \tilde{a} , \tilde{b} ] ^{ p^{ \sigma }} , 
 \tilde{a} ^{ i'p^{ \beta + k_1 }} = \tilde{b} ^{ - j' p^{ \beta + k_2 }}= [ \tilde{a} , \tilde{b} ] ^{-j' p^{ \sigma +k_2 }},
 \\& [ \tilde{a}, \tilde{b}, \tilde{b} ]= [ \tilde{a}, \tilde{b} , \tilde{a}] = 1 \rangle.
\end{align*}
Since $ \alpha-\beta=\gamma-\sigma $ and $ \vert \tilde{a}^{ i'p^{ \beta + k _{1} }} \vert = \vert \tilde{b} ^{ j'p^{ \beta + k _{2} }} \vert,$ we obtain  $  k_1=k_2. $ Put $ k_{1} = k_{2} = k.$ Thus
\[1\neq d = a^{ i' p^{ \beta+k }}b ^{ j' p^{ \beta+k}}  ~\text{ and }~ gcd( p , i ' ) = gcd(p , j ' ) = 1.\]
If $ \beta + k \geq \alpha, $ then $G/  \langle d \rangle\cong G,$ and so $d=1,$  which is  a contradiction. Thus  $ \beta + k< \alpha .$ The case $ (i) $ is obtained.
Now, we want to compute the order of $ d. $ Using \cite[Section 5.1 p.28]{MM} ,  $ Z( G) = \langle a^{ p^{ \gamma } } , [ a , b ] , b^{ p^{ \gamma } } \rangle,$ and hence, $\langle a^{ i' p^{ \beta+k }},b ^{ j' p^{ \beta+k}}\rangle \subseteq Z( G)$. 
Since $ \gamma-\sigma=\alpha -\beta>k $ and  $ |b|=  |a|= p^{ \alpha },$ we get $|a^{ i' p^{ \beta+k }}|=p^{\alpha-\beta-k  }=|b^{ j' p^{ \beta+k }}|= |d|=                                    p^{ \alpha-\beta-k}\neq 1.$
The case $( ii )$ is obtained.
Without loss of generality, taking $ i ' = j ' = 1, $ using \cite[Proposition 3.1]{AMM}, we have 
\begin{equation*}
\begin{split}
 G/  \langle d \rangle\cong H = \langle a_{ 1} , b_{ 1} \ \vert 
&\ a_{ 1} ^{ p^{ \alpha }} = b_{ 1} ^{ p^{ \alpha }}  = [ a_{ 1} , b_{ 1} ] ^{ p^{ \gamma } } = 1 , b_{ 1} ^{ p^{ \beta }} = [ a_{ 1} , b_{ 1} ] ^{ p^{ \sigma }} , a_{1} ^{p^{ \beta + k }} = b_{1} ^{ -  p^{ \beta + k }},\\
&\quad [ a_1, b_1, b_1 ]= [ a_1, b_1 , a_1] = 1,\beta + k< \alpha \rangle .
\end{split}
\end{equation*}
By  \cite[Proposition 1.1]{bey}, we get $  a_1 ^{ p^{ \beta + k}} \in Z^{*} ( H ) \subseteq Z_{2}^{ *} ( H )$.
 We denote the image of $ y\in H$ in  $  H/\langle a_1^{ p^{ \beta + k }} \rangle$ by $ \overline{y}. $ As a result, 
\begin{equation*}
\begin{split}
 H_1 = H / \langle a_1^{ p^{ \beta + k }} \rangle =   \langle \overline{a_1} , \overline{b_1 } \ \vert 
& \ \overline{a_1 } ^{ p^{ \beta + k }} =   [ \overline{ a_1} , \overline{ b_1 } ] ^{ p^{ \sigma + k }} = 1 , \overline{ b_1} ^{ p^{ \beta }} = \overline{[a_1,b_1]}^{ p^{ \sigma }},\\
&\quad [ \overline{a_1}, \overline{b_1}, \overline{b_1} ]= [ \overline{a_1}, \overline{b_1} , \overline{a_1}] = 1 \rangle .
\end{split}
\end{equation*}
Now, Lemma \ref{k1} and Corollary \ref{normal2} imply that 
\[
 \mathcal{M}^{(2)}(G / \langle d \rangle ) \cong  \mathcal{M}^{(2)} ( H) \cong \mathcal{M}^{(2)} ( H_1)\cong 
                                     \mathbb{Z}_{p^{ \beta +k }} \oplus \mathbb{Z}_{p^{ \beta }} \oplus \mathbb{Z}_{p^{ \sigma }}^{ ( 3) },\] and the case $ (iv) $ is proved.
\end{proof}
\begin{thm}\label{ck11}
Let $G$ be the group $G_3$ presented in Corollary \ref{2cap1} $(iii).$ Then $G$ is $2$-capable.
\end{thm}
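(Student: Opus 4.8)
The plan is to prove that the $2$-epicenter $Z_2^*(G)$ is trivial, which by definition is equivalent to the $2$-capability of $G=G_3$. The engine is Lemma \ref{normal}$(i)$: a normal subgroup $N\subseteq Z_2(G)$ lies in $Z_2^*(G)$ exactly when the natural map $\mathcal{M}^{(2)}(G)\to\mathcal{M}^{(2)}(G/N)$ is a monomorphism. Since $G$ has class two, $Z_2(G)=G$, and since $Z_2^*(G)$ is a characteristic subgroup of the $p$-group $G$, a nontrivial $Z_2^*(G)$ would contain a central subgroup of order $p$. Thus it suffices to assume $Z_2^*(G)\neq 1$, pick a suitable nontrivial $d\in Z_2^*(G)$, and contradict the injectivity forced by Lemma \ref{normal}$(i)$ through an order count, exactly as was done for $G_1$ and $G_2$ in Theorem \ref{ck3}. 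The difference here is that $G/G'$ is not capable (because $\alpha>\beta$), so the reduction $Z_2^*(G)\subseteq G'$ used there is unavailable and one must argue inside $G$ itself.

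The decisive case is the one in which the chosen element has the diagonal shape $d=a^{i'p^{\beta+k}}b^{j'p^{\beta+k}}$ with $\gcd(p,i')=\gcd(p,j')=1$, treated in Lemma \ref{ck1}. There Lemma \ref{ck1}$(i)$ shows the two $p$-valuations are forced equal and that $k<\alpha-\beta$, while Lemma \ref{ck1}$(iv)$ computes $\mathcal{M}^{(2)}(G/\langle d\rangle)\cong\mathbb{Z}_{p^{\beta+k}}\oplus\mathbb{Z}_{p^{\beta}}\oplus\mathbb{Z}_{p^{\sigma}}^{(3)}$, of order $p^{2\beta+k+3\sigma}$. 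Comparing this with $\mathcal{M}^{(2)}(G)\cong\mathbb{Z}_{p^{\alpha}}\oplus\mathbb{Z}_{p^{\beta}}\oplus\mathbb{Z}_{p^{\sigma}}^{(3)}$ from Lemma \ref{k1}, of order $p^{\alpha+\beta+3\sigma}$, and using $\beta+k<\alpha$, I obtain $|\mathcal{M}^{(2)}(G)|>|\mathcal{M}^{(2)}(G/\langle d\rangle)|$. Hence the natural map cannot be a monomorphism, so by Lemma \ref{normal}$(i)$ we get $\langle d\rangle\not\subseteq Z_2^*(G)$, the desired contradiction.

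The remaining work, and the step I expect to be the main obstacle, is the reduction showing that a nontrivial $Z_2^*(G)$ necessarily produces such a diagonal $d$. Concretely, I would start from a central element of order $p$, write it in terms of the central generators $a^{p^\gamma}$, $b^{p^\gamma}$, $[a,b]$ of $Z(G)$ (Lemma \ref{ck1} uses $Z(G)=\langle a^{p^\gamma},[a,b],b^{p^\gamma}\rangle$), and use the defining relation $b^{p^\beta}=[a,b]^{p^\sigma}$ together with $\alpha-\beta=\gamma-\sigma$ to match the $a$- and $b$-valuations; the equality $|\tilde a^{i'p^{\beta+k_1}}|=|\tilde b^{j'p^{\beta+k_2}}|$ exploited in Lemma \ref{ck1}$(i)$ is exactly what forces the diagonal form. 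Elements lying purely in $G'=\langle[a,b]\rangle$ or in a single cyclic factor must be excluded separately: for these one either runs the analogous multiplier computation and order comparison, or invokes the capability of $G_3$, that is $Z^*(G_3)=1$ together with $Z^*(G)\subseteq Z_2^*(G)$ from \cite[Lemma 2.1]{BE}. Once every nontrivial candidate is driven into the scope of Lemma \ref{ck1} (or an easier parallel computation), the strict order drop closes the argument and yields $Z_2^*(G)=1$, so that $G_3$ is $2$-capable.
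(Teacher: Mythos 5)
Your overall engine is the paper's: Lemma \ref{normal}$(i)$, the computation $\mathcal{M}^{(2)}(G_3)\cong\mathbb{Z}_{p^{\alpha}}\oplus\mathbb{Z}_{p^{\beta}}\oplus\mathbb{Z}_{p^{\sigma}}^{(3)}$ from Lemma \ref{k1}, and the order drop supplied by Lemma \ref{ck1}$(iv)$; your handling of the diagonal case is correct. The genuine gap is exactly the step you flag as the main obstacle: confining a putative nontrivial element of $Z_2^*(G_3)$ to the diagonal shape. The paper does this \emph{before} touching Lemma \ref{ck1}: it sets $M=\langle a^{p^{\beta}},b^{p^{\beta}}\rangle\subseteq Z(G_3)$, verifies $G_3/M\cong G_2$, invokes Theorem \ref{ck3} to conclude $G_3/M$ is $2$-capable, and deduces $Z_2^*(G_3)\subseteq M$, so that every candidate $d$ is of the form $a^{ip^{\beta}}b^{jp^{\beta}}$. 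Your proposal has no substitute for this confinement. Starting instead from an arbitrary central element of order $p$ cannot be repaired by ``valuation matching'': one computes $\Omega_1(Z(G_3))=\langle a^{p^{\alpha-1}}\rangle\times\langle [a,b]^{p^{\gamma-1}}\rangle$ (note $[a,b]^{p^{\gamma-1}}=b^{p^{\alpha-1}}$), so genuinely non-diagonal candidates exist, and Lemma \ref{ck1}$(i)$ does not \emph{force} diagonality --- it \emph{assumes} $d=a^{i'p^{\beta+k_1}}b^{j'p^{\beta+k_2}}$ with both $i',j'$ prime to $p$ and only proves $k_1=k_2$. (To be fair, the paper's own proof also tacitly assumes both exponents of $d\in M$ are nonzero, but the reduction to $M$ is what makes the residual cases small.)

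Moreover, both of your proposed fallbacks for the non-diagonal elements fail as stated. First, ``invoking the capability of $G_3$'' is logically backwards: the inclusion from \cite[Lemma 2.1]{BE} is $Z^*(G)\subseteq Z_2^*(G)$, so $Z^*(G_3)=1$ puts no upper bound whatsoever on $Z_2^*(G_3)$; if capability excluded elements from the $2$-epicenter, every capable group would automatically be $2$-capable and Theorem B (indeed the whole problem the paper addresses) would be vacuous. Second, the ``analogous multiplier computation'' is not carried out, and it cannot be outsourced to the paper without circularity: $G_3/\langle a^{p^{\alpha-1}}\rangle$ is a $K_3$-type group, and $G_3/\langle [a,b]^{p^{\gamma-1}}\rangle$ is a noncapable group of type $(\alpha,\beta,\gamma-1;\gamma-1,\sigma)$ with $\sigma+\alpha-\beta>\gamma-1$, which does not even occur among the groups $K_1,\dots,K_5$ of Corollary \ref{cas}; the only places where $2$-nilpotent multipliers of noncapable groups are computed are Theorems \ref{nb} and \ref{uu} in Section 4, whose proofs rest on Theorem E, hence on Theorem B, hence on the very theorem you are proving. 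So to complete your route you would have to perform fresh free-presentation computations (in the style of Lemma \ref{k1}) for these two quotients, whereas adopting the paper's reduction $Z_2^*(G_3)\subseteq M$ renders them unnecessary.
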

\begin{proof}
Put $ M=  \langle a^{ p^{ \beta }} ,  b^{ p^{ \beta } }\rangle.$ Using \cite[Section 5.1 p.28]{MM},  $ Z( G_3) = \langle a^{ p^{ \gamma } } , [ a , b ] , b^{ p^{ \gamma } } \rangle,$ and so $ M \unlhd G_3$.  Consider the image of $ y\in G_3 $ in  $  G_3/M$ by $ \tilde{y}. $ As a result,
\begin{center}
$ G_3/ M = \langle \tilde{a} , \tilde{b} \ \vert \ \tilde{a}^{ p^{ \beta }} = \tilde{b}^{ p^{ \beta }} = [ \tilde{a} , \tilde{b} ] ^{ p^{ \sigma }} = [ \tilde{a} , \tilde{ b}, \tilde{ a} ] =   [ \tilde{a} , \tilde{ b}, \tilde{ b} ] = 1,\beta > \sigma \rangle \cong G_2.$
\end{center} 
From Theorem \ref{ck3}, $ G_3/ M $ is $2$-capable and hence $ Z_{2}^{ * } (G_3) \subseteq M.$ We claim that $  Z_{2} ^{ *} ( G_3)=1. $ By the way of contradiction, assume that $ 1 \neq d \in Z_{2} ^{ *} ( G_3)  $ is an arbitrary element. Then $ d = a^{ i p^{ \beta }}b ^{ j p^{ \beta }}  $ such that $ i $  and $ j  $ are integers. Suppose that $ i = i ' p^{ k_{ 1} } , j = j ' p^{ k_{ 2} },$ and $ gcd( p , i ' ) = gcd(p , j ' ) = 1, $ where $ k_1 $ and $ k_2 $ are integers. 
Lemmas \ref{k1} and \ref{ck1} imply  that
$
 \mathcal{M}^{(2)}(G_3 / \langle d \rangle ) \cong 
                                     \mathbb{Z}_{p^{ \beta +k }} \oplus \mathbb{Z}_{p^{ \beta }} \oplus \mathbb{Z}_{p^{ \sigma }}^{ ( 3) } $
and
$
 \mathcal{M}^{(2)}(G_3 ) \cong 
                                     \mathbb{Z}_{p^{ \alpha }} \oplus \mathbb{Z}_{p^{ \beta }} \oplus \mathbb{Z}_{p^{ \sigma }}^{ ( 3) }.$
   Hence, $ \vert \mathcal{M}^{(2)} (G_3) \vert > \vert \mathcal{M}^{(2)} (G_3 / \langle d \rangle ) \vert ,$  and we will have  $d\not\in Z_{2}^{ *} ( G),$ by Lemma  \ref{normal}$ (i).$  This contradiction completes the proof. 
\end{proof}
\begin{p b}
Let $G$ be $2$-capable. Then $G$ is capable, and so Corollary \ref{2cap1}(a) implies that $G$  is isomorphic to one of the groups $G_1, G_2,$ or $G_3.$ The converse holds by Theorems \ref{ck3} and \ref{ck11}.
\end{p b}
\begin{p e}
If $ G $ is  capable, then  $ Z^{ *} ( G) = Z_{2}^{ *} ( G) =1,$ using Theorem B. Otherwise, since $ G/Z^{ *} ( G) $ is nilpotent of  class at most $ 2, $ Corollary D and \cite[Theorem 1.3]{BE} imply that $ G/Z^{ *} ( G) $  is $2$-capable, and so $  Z_{2}^{ *} ( G)/Z^{ *} ( G)\subseteq Z_{2}^{ *} ( G/Z^{ *} ( G))=1.$ Hence, $ Z_{2} ^{ *} ( G)\subseteq Z^{ *} ( G) $  as required.
\end{p e}
\section{$2 $-Nilpotent multipliers of some noncapable $ p $-groups }
In this section, we intend to determine  the structures of $2 $-nilpotent multipliers for noncapable finite $2$-generator $ p $-groups of class two.

 A criteria for detecting a capable group is the notion of the exterior center. The exterior center of a group $G,$ $ Z^{\wedge}(G), $ is defined in \cite{el} as follows:
\[Z^{\wedge}(G)=\{g\in G\mid  g \wedge h = 1_{G\wedge G}~\text{for all}~ h\in G\},\]
where $ \wedge $ denotes the operator of the nonabelian exterior square. It is shown \cite{el} that 
$Z^*(G)=Z^{\wedge}(G).$ It implies that $ G $ is capable if and only if $Z^{\wedge}(G)=1.  $
\begin{lem}\label{nb10}
Let $G = G_p(\alpha,\beta,\gamma;\rho,\sigma)$  be a $2$-generator $p$-group of class two  presented as in Theorem \ref{pres}. If $ k >\mathrm{max} (\rho, \sigma), $ then
 $$ a^{p^{k}}\wedge b=a \wedge b^{p^{k}} = (a\wedge b)^{\frac{p^{k}(p^{k}+1)}{2}} $$

 \end{lem}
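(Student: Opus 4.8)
The plan is to reduce the whole statement to a single power rule for the nonabelian exterior square of a class-two group and then to collapse the resulting correction terms by means of the two defining relations $a^{p^{\alpha}}=[a,b]^{p^{\rho}}$ and $b^{p^{\beta}}=[a,b]^{p^{\sigma}}$. First I would record the multiplicativity identities defining $G\wedge G$,
\[ xx'\wedge y=({}^{x}x'\wedge {}^{x}y)(x\wedge y),\qquad x\wedge yy'=(x\wedge y)({}^{y}x\wedge {}^{y}y'), \]
where ${}^{x}y=xyx^{-1}$, together with the relation $x\wedge x=1$. Because $G$ has class two, $[a,b]$ is central, so ${}^{a}b=b[a,b]$ and ${}^{b}a=a[a,b]^{-1}$; moreover every element $g\wedge[a,b]$ lies in the kernel of the commutator map $G\wedge G\to G$, which is central in $G\wedge G$. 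With these facts a routine induction on $n$ yields the power rules
\[ a^{n}\wedge b=(a\wedge b)^{n}\,(a\wedge[a,b])^{\binom{n}{2}},\qquad a\wedge b^{n}=(a\wedge b)^{n}\,(b\wedge[a,b])^{\binom{n}{2}}. \]

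Next I would determine the orders of the two correction elements. The elementary identities $a\wedge a^{m}=1$ and $b\wedge b^{m}=1$, which follow at once from $x\wedge x=1$ and multiplicativity, combine with the defining relations of $G$: since $[a,b]$ is central and $[a,b]^{p^{\rho}}=a^{p^{\alpha}}$,
\[ (a\wedge[a,b])^{p^{\rho}}=a\wedge[a,b]^{p^{\rho}}=a\wedge a^{p^{\alpha}}=1, \]
and symmetrically $(b\wedge[a,b])^{p^{\sigma}}=b\wedge b^{p^{\beta}}=1$. Thus $a\wedge[a,b]$ has order dividing $p^{\rho}$ and $b\wedge[a,b]$ has order dividing $p^{\sigma}$.

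Finally I would run the arithmetic of the binomial coefficient. As $p$ is odd, $v_{p}\binom{p^{k}}{2}=v_{p}\!\big(p^{k}\cdot\tfrac{p^{k}-1}{2}\big)=k$, so the hypothesis $k>\max(\rho,\sigma)$ gives $p^{\rho}\mid\binom{p^{k}}{2}$ and $p^{\sigma}\mid\binom{p^{k}}{2}$; by the previous paragraph both correction terms are trivial when $n=p^{k}$. Substituting $n=p^{k}$ into the two power rules then gives at once $a^{p^{k}}\wedge b=a\wedge b^{p^{k}}=(a\wedge b)^{p^{k}}$, and rewriting $\tfrac{p^{k}(p^{k}+1)}{2}=p^{k}+\binom{p^{k}}{2}$ identifies this common value with $(a\wedge b)^{p^{k}(p^{k}+1)/2}$ once one checks that $(a\wedge b)^{\binom{p^{k}}{2}}=1$. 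I expect the genuine work to sit in two places: the inductive proof of the power rule with the precise coefficient $\binom{n}{2}$, where one must keep careful track of the conjugations ${}^{b}a=a[a,b]^{-1}$ and of the centrality of $\ker(G\wedge G\to G)$ in $G\wedge G$; and the final exponent bookkeeping, namely verifying $(a\wedge b)^{\binom{p^{k}}{2}}=1$, so that the answer may be presented in the symmetric form $\tfrac{p^{k}(p^{k}+1)}{2}$ rather than simply as $(a\wedge b)^{p^{k}}$.
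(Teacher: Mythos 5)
Your computation is essentially the same as the paper's (expand the power via the biadditivity relations, note that $a\wedge[a,b]$ and $b\wedge[a,b]$ lie in the central kernel of the commutator map $\kappa\colon G\wedge G\to G'$, and kill the correction terms using $a^{p^{\alpha}}=[a,b]^{p^{\rho}}$, $b^{p^{\beta}}=[a,b]^{p^{\sigma}}$ together with the valuation of $\binom{p^{k}}{2}$), and everything you actually prove is correct. One small point: do not restrict to odd $p$. The lemma is invoked later for the $2$-groups $K_6,\dots,K_{14}$ as well; for $p=2$ one has $v_2\binom{2^{k}}{2}=k-1\geq\max(\rho,\sigma)$, which is precisely why the hypothesis is the strict inequality $k>\max(\rho,\sigma)$.

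The real issue is the step you deferred. The check ``$(a\wedge b)^{\binom{p^{k}}{2}}=1$'' is not exponent bookkeeping: it is false in general under the stated hypotheses, so your argument cannot be completed to yield the exponent $\tfrac{p^{k}(p^{k}+1)}{2}$ --- and in fact your value $(a\wedge b)^{p^{k}}$ is the correct one, while the exponent displayed in the lemma is an error in the paper. To see this, apply $\kappa\colon G\wedge G\to G'$, $x\wedge y\mapsto [x,y]$: on one side $\kappa(a^{p^{k}}\wedge b)=[a^{p^{k}},b]=[a,b]^{p^{k}}$, while on the other $\kappa\bigl((a\wedge b)^{p^{k}(p^{k}+1)/2}\bigr)=[a,b]^{p^{k}}[a,b]^{\binom{p^{k}}{2}}$. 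Since $[a,b]$ has order $p^{\gamma}$ and $v_p\binom{p^{k}}{2}=k$ for odd $p$, these differ whenever $\gamma>k$, which the hypothesis $k>\max(\rho,\sigma)$ does not rule out: for instance $G=G_p(5,3,3;1,0)$ lies in family $1(c)$ of Theorem \ref{pres}, and with $k=2>\max(\rho,\sigma)=1$ the claimed identity fails. Correspondingly, the paper's own proof contains a miscount at the collection step: passing from $\prod_{i=0}^{p^{k}-1}(a\wedge[a^{i},b])(a\wedge b)$ to $(a\wedge b)^{1+\cdots+p^{k}}\prod_{i=1}^{p^{k}-1}(a\wedge[a,b])^{i}$ treats the $p^{k}$ factors $(a\wedge b)$ as if there were $1+2+\cdots+p^{k}$ of them; collecting correctly (legitimate, since $a\wedge[a,b]$ is central in $G\wedge G$) gives exactly your formula $a^{p^{k}}\wedge b=a\wedge b^{p^{k}}=(a\wedge b)^{p^{k}}$. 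So instead of trying to verify $(a\wedge b)^{\binom{p^{k}}{2}}=1$, state and prove the lemma with exponent $p^{k}$; this corrected form is all that is needed in Theorem \ref{nb} and the later computations, where one only uses that these elements are powers of $a\wedge b$ that become trivial under the relevant relations.
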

 \begin{proof}We claim that $a^{p^{k}}\wedge b=(a\wedge b)^{\frac{p^{k}(p^{k}+1)}{2}}.$ For this, we have
  \begin{align*}
   a^{p^{k}}\wedge b &=\prod_{i=0}^{p^{k}-1} (^{a^i}(a\wedge b))=\prod_{i=0}^{p^{k}-1} (a\wedge ^{a^i}b)\\&=\prod_{i=0}^{p^{k}-1} (a\wedge [a^i,b]b)=\prod_{i=0}^{p^{k}-1} (a\wedge [a^i,b])  (a\wedge b)\\&=(a\wedge b)^{1+\ldots+p^{k}}\prod_{i=1}^{p^{k}-1} (a\wedge [a,b])^i \\&=(a\wedge b)^{\frac{p^{k}(p^{k}+1)}{2}}\prod_{i=1}^{p^{k}-1} (a\wedge [a,b])^i\\&
  =(a\wedge b)^{\frac{p^{k}(p^{k}+1)}{2}}(a\wedge [a,b])^{1+\ldots+p^{k}-1}\\&=(a\wedge b)^{\frac{p^{k}(p^{k}+1)}{2}}
(a\wedge [a,b])^{\frac{p^{k}(p^{k}-1)}{2}} \\&=(a\wedge b)^{\frac{p^{k}(p^{k}+1)}{2}}(a\wedge [a,b]^{\frac{p^{k}(p^{k}-1)}{2}})\\&=
(a\wedge b)^{\frac{p^{k}(p^{k}+1)}{2}}\big{(} a\wedge ([a,b]^{p^{\rho}})^{\frac{p^{k-\rho}(p^{k}-1)}{2}}\big{)}\\&=
(a\wedge b)^{\frac{p^{k}(p^{k}+1)}{2}} \big{(} a\wedge (a^{p^{\alpha}})^{\frac{p^{k-\rho}(p^{k}-1)}{2}}\big{)}=
(a\wedge b)^{\frac{p^{k}(p^{k}+1)}{2}},
  \end{align*}
   and so\begin{equation}\label{8}
   a^{p^{k}}\wedge b=(a\wedge b)^{\frac{p^{k}(p^{k}+1)}{2}}.
\end{equation} 
  Similarly, 
  \begin{equation}\label{7}
   a\wedge b^{p^{k}}=(b^{p^{k}}\wedge a)^{-1}=(b\wedge a)^{-\frac{p^{k}(p^{k}+1)}{2}},
  \end{equation}
  as desirable.
  
 \end{proof}
 
Note that, in  Lemma \ref{nb10}, the condition $ k >\mathrm{max} (\rho, \sigma)$  may be replaced by $k\geq \mathrm{max} (\rho, \sigma),$ whenever  $p$ is an odd prime number. \par

Using Theorem  \ref{pres} and Lemma \ref{mcap}, all noncapable finite $2$-
generator $p$-groups of class two are described as follows:
\begin{cor}\label{cas}
Let $ G $ be a  finite $ 2 $-generator $ p $-group of class two  with the presentation as in Theorem  \ref{pres}. Then the following results hold:\\If  $p$ is an odd prime number, then $G$ is noncapable if and    only if $G$  is isomorphic to exactly one of the following groups.
\item[$(1)$]  $K_1=\langle a, b \ \vert \ [ a , b ] ^{ p^{ \gamma }} =[ a, b, b ]= [ a, b , a] = 1=b ^{p^{ \beta } }, a ^{ p^{ \alpha }} =[ a , b ] ^{ p^{ \rho }}\rangle,$ where $\alpha>\beta \geq\gamma$  and $0\leq \rho < \gamma.$ 
\item[$(2)$] $K_2=\langle a, b \mid a ^{ p^{ \alpha }} =b ^{p^{ \beta } } =[ a , b ] ^{ p^{ \gamma }} =[ a, b, b ]= [ a, b , a] = 1 \rangle,$ where $\alpha>\beta \geq \gamma.$  
\item[$(3)$] $K_3=\langle a, b \ \vert \ \ a ^{ p^{ \alpha }}=[ a , b ] ^{ p^{ \gamma }} =[ a, b, b ]= [ a, b , a] = 1 ,  b ^{p^{ \beta } } =[ a , b ] ^{ p^{ \sigma }} \rangle,$ where $\alpha>\beta\geq \gamma$  and $0 \leq\sigma<\sigma+\alpha-\beta < \gamma.$      
\item[$(4)$] $K_4=\langle a, b \ \vert \ [ a , b ] ^{ p^{ \gamma }} =[ a, b, b ]= [ a, b , a] = 1,a ^{p^{ \alpha } }=[ a , b ] ^{ p^{ \rho }},\ b ^{ p^{ \beta }} =[ a , b ] ^{ p^{ \sigma }}\rangle,  $ where $\alpha>\beta\geq\gamma$  and $0\leq \sigma<\rho< \mathrm{min} (\gamma, \sigma+\alpha-\beta).$
\item[$(5)$] $K_5=\langle a, b \ \vert \ [ a , b ] ^{ p^{ \gamma }} =[ a, b, b ]= [ a, b , a] = 1=b ^{p^{ \alpha} },\ a ^{ p^{ \alpha }} =[ a , b ] ^{ p^{ \rho }} \rangle,$ where $\alpha>\gamma>\rho.$
\\
 If  $p=2,$ then  $G$ is noncapable if and only if $G$  is isomorphic to exactly one of the following groups.
\item[$(6)$] $K_6=\langle a, b \ \vert \ [ a , b ] ^{ 2^{ \gamma }} =[ a, b, b ]= [ a, b , a] = 1,\ a ^{ 2^{ \alpha }} =[ a , b ] ^{ 2^{ \rho }},  b ^{2^{ \beta } } =1  \rangle,$ where $\alpha>\beta \geq\gamma$  and $0\leq \rho<\gamma.$ 
\item[$(7)$]$K_7=\langle a, b \ \vert \ [ a , b ] ^{ 2^{ \gamma }} =[ a, b, b ]= [ a, b , a] = 1,\ a ^{ 2^{ \alpha }} =1=  b ^{2^{ \beta } } \rangle,$ where $\alpha>\beta >\gamma.$  
\item[$(8)$]$K_8=\langle a, b \ \vert \ [ a , b ] ^{ 2^{ \beta  }} =[ a, b, b ]= [ a, b , a] = 1,\ a ^{ 2^{ \alpha }} =1= b ^{2^{ \beta } }  \rangle,$ where $\alpha>\beta+1. $  
\item[$(9)$]$K_9=\langle a, b \ \vert \ [ a , b ] ^{ 2^{ \gamma }} =[ a, b, b ]= [ a, b , a] = 1,\ a ^{ 2^{ \alpha }} =1  , b ^{2^{\beta} } =[ a , b ] ^{ 2^{ \sigma }}\rangle,$ where $0 \leq\sigma<\sigma+\alpha-\beta<\gamma.$
\item[$(10)$]$K_{10}=\langle a, b \ \vert \ [ a , b ] ^{ 2^{ \gamma }} =[ a, b, b ]= [ a, b , a] = 1,\ a ^{ 2^{ \alpha }} =[ a , b ] ^{ 2^{ \rho }}, b ^{2^{\beta } } =[ a , b ] ^{ 2^{ \sigma }}\rangle,$  where $0\leq \sigma<\rho< \mathrm{min} (\gamma, \sigma+\alpha-\beta).$
\item[$(11)$]$K_{11}=\langle a, b \ \vert \ [ a , b ] ^{ 2^{ \gamma }} =[ a, b, b ]= [ a, b , a] = 1,\ a ^{ 2^{ \alpha }} = [ a , b ] ^{ 2^{ \rho }}, b ^{2^{ \alpha } }=1 \rangle,$ where $\alpha>\gamma> \rho.$  
\item[$(12)$]$K_{12}=\langle a, b \ \vert \ [ a , b ] ^{ 2^{ \alpha }} =[ a, b, b ]= [ a, b , a] = 1,\ a ^{ 2^{ \alpha }} = [ a , b ] ^{ 2^{ \rho }}, b ^{2^{ \alpha } }=1 \rangle,$ where $\alpha> \rho.$ 
\item[$(13)$]$K_{13}=\langle a, b \ \vert \ [ a , b ] ^{ 2^{ \beta }} =[ a, b, b ]= [ a, b , a] = 1,\ a ^{ 2^{ \beta }} = [ a , b ] ^{ 2^{ \rho }}, b ^{2^{ \beta } }=1 \rangle,$ where $\beta-1> \rho.$
\item[$(14)$]$K_{14}=\langle a, b \ \vert \ [ a , b ] ^{ 2^{ \alpha }} =[ a, b, b ]= [ a, b , a] = 1,\ a ^{ 2^{ \alpha }} = [ a , b ] ^{ 2^{ \alpha-1 }}= b ^{2^{ \alpha } } \rangle.$  
 \end{cor}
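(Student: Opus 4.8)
The plan is to read this off as the complement, within the complete classification of Theorem \ref{pres}, of the capable groups already isolated in Corollary \ref{2cap1}. Theorem \ref{pres} says every finite $2$-generator $p$-group of class two is isomorphic to exactly one normal-form tuple $(\alpha,\beta,\gamma;\rho,\sigma)$ in Families $1(a)$--$3(c)$, and Lemma \ref{mcap} decides capability solely from these parameters. So the argument reduces to traversing the families, negating the capability criterion of Lemma \ref{mcap} in each, and rewriting every surviving tuple as one of $K_1,\dots,K_{14}$. Pairwise non-isomorphism needs no separate work, being inherited from the closing sentence of Theorem \ref{pres} once the $K_i$ are exhibited as distinct normal-form tuples.

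For odd $p$ I would process Families $1(a)$, $1(b)$, $1(c)$ and $2$. By Lemma \ref{mcap}(a) a group is capable iff $\alpha-\beta=\rho-\sigma$ and $\beta=\rho$. Since $\alpha>\beta$ throughout Family $1$ while $\sigma=\gamma$ in $1(a)$, the equality $\alpha-\beta=\rho-\sigma$ cannot hold there, so the whole of Family $1(a)$ is noncapable and separates, according to whether $\rho<\gamma$ or $\rho=\gamma$, into the presentations $K_1$ and $K_2$. The complementary tuples of $1(b)$ and $1(c)$ --- those with $0\le\sigma<\sigma+\alpha-\beta<\gamma$, respectively $0\le\sigma<\rho<\min(\gamma,\sigma+\alpha-\beta)$ --- are exactly $K_3$ and $K_4$, while the noncapable members of Family $2$ (where $\alpha=\beta>\gamma$ and $\min(\rho,\sigma)<\gamma$) collapse to $K_5$. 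Along the way one verifies that the only tuples surviving the capability test are $G_1,G_2,G_3$, matching Corollary \ref{2cap1}(a).

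The case $p=2$ follows the identical scheme but is heavier: Lemma \ref{mcap}(b) states capability as a disjunction of the three conditions $(i)$--$(iii)$, so negating it means excluding all three at once, and Family $3$ now splits further into $3(a)$--$3(c)$ with the Kronecker-delta constraints in play. Tracking the genuinely $2$-adic phenomena --- the exponent shift $a^{2^{\beta+1}}$, the degenerate tuples with $\alpha=\beta$ or $\alpha=\gamma$, and boundary constraints such as $\beta-1>\rho$ --- is what produces $K_6$ through $K_{14}$.

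The main obstacle is accordingly not an isolated hard step but the bookkeeping: one must check that the inequality decorating each $K_i$ is exactly the negation of the capability condition inside the corresponding family, with no tuple double-counted and none dropped, and that every normal-form relation has been faithfully transcribed into the displayed presentation. The delicate spots are the $p=2$ subcases, where the disjunctive criterion and the Kronecker-delta conditions make it easy to miss a boundary family.
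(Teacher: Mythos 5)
Your plan is exactly the paper's (implicit) proof: Corollary \ref{cas} is stated with no argument beyond the citation of Theorem \ref{pres} and Lemma \ref{mcap}, i.e.\ one complements the capable normal forms inside the classification. The difficulty is that the concrete case-splits you assert for odd $p$ are incorrect, and they fail at precisely the bookkeeping points you defer. First, the normal forms of Family $1(b)$ are \emph{all} tuples $(\alpha,\beta,\gamma;\gamma,\sigma)$ with $0\le\sigma<\gamma$ (any such tuple satisfies the clause ``$\sigma<\rho=\gamma$'', so it is its own normal form), and among these the capable ones are those with $\sigma+\alpha-\beta=\gamma$, namely the groups $G_3$ of Corollary \ref{2cap1}. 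Hence the noncapable complement of $1(b)$ is $\sigma+\alpha-\beta\neq\gamma$, not ``$0\le\sigma<\sigma+\alpha-\beta<\gamma$'' as you claim. Tuples with $\sigma+\alpha-\beta>\gamma$ exist whenever $\alpha\ge\beta+2$, for instance $(3,1,1;1,0)$, i.e.\ $\langle a,b\mid a^{p^{3}}=[a,b]^{p}=[a,b,a]=[a,b,b]=1,\ b^{p}=[a,b]\rangle$ of order $p^{5}$; such groups are noncapable, and by the pairwise-nonisomorphism clause of Theorem \ref{pres} they are isomorphic to none of $K_1,\dots,K_5$. Second, Family $2$ for odd $p$ is not only ``$\alpha=\beta>\gamma$''; it also contains $\alpha=\beta=\gamma$. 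Its noncapable normal forms $(\alpha,\alpha,\alpha;\rho,\alpha)$ with $\rho<\alpha$ (e.g.\ the modular group of order $p^{3}$, the tuple $(1,1,1;0,1)$) violate the constraint $\alpha>\gamma$ imposed in $K_5$, so they do not ``collapse to $K_5$'' and appear nowhere in the list. Thus the traversal, carried out correctly, does not close up on the statement as printed; this is a genuine gap, not a routine verification you may wave through.

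Two further points. You take Lemma \ref{mcap}(a) at face value (``$\alpha-\beta=\rho-\sigma$ and $\beta=\rho$''), but that literal reading is inconsistent with Corollary \ref{2cap1}(a): the capable group $G_2=(\alpha,\alpha,\gamma;\gamma,\gamma)$ with $\alpha>\gamma$ has $\beta=\alpha\neq\gamma=\rho$. The condition must be read as $\rho=\gamma$ together with $\alpha-\beta=\rho-\sigma$, or better, one should complement directly against the already-established list in Corollary \ref{2cap1}; as written, your claim that the traversal ``verifies'' that only $G_1,G_2,G_3$ survive cannot go through. Finally, your $p=2$ half is a promise rather than an argument, and the same boundary failures occur there: for example $(2,1,1;1,0)\cong\mathbb{Z}_4\rtimes\mathbb{Z}_4$ is a noncapable Family $1(b)$ normal form (it fails the Kronecker-delta condition in Lemma \ref{mcap}(b)) that coincides with no $K_6,\dots,K_{14}$, while $K_{12}$ with $\rho=\alpha-1$ is, by Family $3(c)$ of Theorem \ref{pres}, isomorphic to the capable group $G_4$, so the printed list even overlaps the capable side. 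In short, the method is the right one and is the paper's own, but executing it honestly exposes omissions and overlaps in the side conditions; a correct proof would have to repair those conditions rather than reproduce them.
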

The following result gives an element in the epicenter of some noncapable finite $2$-generator $p$-groups of class two. 
 \begin{thm}\label{nb}
 Let $ G= K_i$   be the $2$-generator $p$-group of class two  presented as in Corollary \ref{cas}. Then for  $i\neq 8,$ we have
 \begin{itemize}
\item[$(i)$] $b^{{p}^{\alpha}}\in Z_2^{*}(G),$ if  $ a^{p^{ \alpha}}=1;$ 
\item[$(ii)$]$a^{{p}^{\beta}}\in Z_2^{*}(G),$ if  $ b^{p^{ \beta}}=1;$
\item[$(iii)$] $a^{{p}^{\alpha}}\in  Z_2^{*}(G),$ if  $\sigma<\rho.$
\end{itemize}
 \end{thm}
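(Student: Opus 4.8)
The plan is to exploit the chain $Z^{*}(G)=Z^{\wedge}(G)\subseteq Z_{2}^{*}(G)$ recorded just before the statement, so that in each case it suffices to place the prescribed element $x$ (one of $b^{p^{\alpha}}$, $a^{p^{\beta}}$, $a^{p^{\alpha}}$) in the exterior centre $Z^{\wedge}(G)$, i.e.\ to verify $x\wedge h=1_{G\wedge G}$ for every $h\in G$. The first reduction I would make is that this need only be checked on the two generators: from the identity $x\wedge(gh)=(x\wedge g)\,{}^{g}(x\wedge h)$ one gets $x\wedge a^{-1}=x\wedge b^{-1}=1$ as soon as $x\wedge a=x\wedge b=1$, and then an induction on word length in $a^{\pm1},b^{\pm1}$ yields $x\wedge h=1$ for all $h$. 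Thus each case collapses to computing exactly two wedge products, for which the available tools are the elementary identities $g^{n}\wedge g=1$ and $c^{n}\wedge g=(c\wedge g)^{n}$ for central $c$, together with Lemma \ref{nb10}.

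Case $(iii)$ is the most self-contained and I would treat it first. Here $x=a^{p^{\alpha}}=[a,b]^{p^{\rho}}\in G'$. Reading $x$ as a power of $a$ gives $x\wedge a=a^{p^{\alpha}}\wedge a=1$ at once. For $x\wedge b$, centrality of $[a,b]$ gives $x\wedge b=([a,b]\wedge b)^{p^{\rho}}$, and the relation $[a,b]^{p^{\sigma}}=b^{p^{\beta}}$ forces $([a,b]\wedge b)^{p^{\sigma}}=b^{p^{\beta}}\wedge b=1$; hence $[a,b]\wedge b$ has order dividing $p^{\sigma}$, and since $\sigma<\rho$ we get $x\wedge b=([a,b]\wedge b)^{p^{\rho}}=1$. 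Therefore $a^{p^{\alpha}}\in Z^{\wedge}(G)\subseteq Z_{2}^{*}(G)$, which applies to $K_{4}$ and $K_{10}$.

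Cases $(i)$ and $(ii)$ I would run symmetrically through the identity $a^{p^{k}}\wedge b=a\wedge b^{p^{k}}$ of Lemma \ref{nb10}. In $(i)$ the hypothesis $a^{p^{\alpha}}=1$ gives $b^{p^{\alpha}}\wedge a=(a\wedge b^{p^{\alpha}})^{-1}=(a^{p^{\alpha}}\wedge b)^{-1}=1$, while $b^{p^{\alpha}}\wedge b=1$ since $b^{p^{\alpha}}$ is a power of $b$; in $(ii)$ the hypothesis $b^{p^{\beta}}=1$ gives $a^{p^{\beta}}\wedge b=a\wedge b^{p^{\beta}}=1$, while $a^{p^{\beta}}\wedge a=1$. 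The bookkeeping step is to confirm that the relevant exponent exceeds $\max(\rho,\sigma)$ so that Lemma \ref{nb10} applies: for $(i)$ the inequalities $\alpha>\beta\geq\gamma=\max(\rho,\sigma)$ cover $K_{3},K_{9}$ (and $b^{p^{\alpha}}=1$ makes the claim trivial for $K_{2},K_{7}$), and for $(ii)$ the parameters of $K_{1},K_{2},K_{7}$ and of $K_{6}$ with $\beta>\gamma$ give $\beta>\max(\rho,\sigma)$, using the odd-prime relaxation $k\geq\max(\rho,\sigma)$ from the remark after Lemma \ref{nb10} where the inequality is not strict.

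The hard part will be exactly the strictness forced on $k>\max(\rho,\sigma)$ when $p=2$. It holds in every relevant configuration except $K_{6}$ with $\beta=\gamma$, where $\rho<\gamma$ but $\sigma=\gamma=\beta$, so the $a\wedge b^{p^{\beta}}$ half of the identity is unavailable; indeed a direct expansion there yields $a^{p^{\beta}}\wedge b=(b\wedge[a,b])^{2^{\beta-1}}$, which need not vanish, so in that single case $a^{p^{\beta}}$ lands in $Z_{2}^{*}(G)$ but \emph{not} necessarily in the smaller $Z^{*}(G)=Z^{\wedge}(G)$. For that boundary configuration I would abandon the exterior centre and argue directly via Lemma \ref{normal}$(i)$ (equivalently Corollary \ref{normal2}), showing $\mathcal{M}^{(2)}(G)\cong\mathcal{M}^{(2)}(G/\langle a^{p^{\beta}}\rangle)$ with the explicit $2$-nilpotent multipliers of Section $4$. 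The same degeneracy $\rho=\sigma=\gamma=\beta$ is what makes even this fallback collapse for $K_{8}$, which accounts for its exclusion from the statement.
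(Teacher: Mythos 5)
Your overall strategy is the same as the paper's own proof, which is stated very tersely there: pass from $Z_2^{*}(G)$ to the exterior centre (the paper does this via the \emph{equality} $Z^{*}(G)=Z_2^{*}(G)$ of Theorem E rather than the inclusion you use), observe that membership in $Z^{\wedge}(G)$ need only be tested against the generators $a,b$, and compute the two wedges by the expansion underlying Lemma \ref{nb10}. Your justification of the reduction to generators, your case $(i)$, and your case $(iii)$ argument for $K_4,K_{10}$ are all correct (case $(iii)$ is in fact cleaner than invoking Lemma \ref{nb10}). The genuine gap is in case $(ii)$. The groups of Corollary \ref{cas} with $b^{p^{\beta}}=1$ are $K_1,K_2,K_5$ (odd $p$) and $K_6,K_7,K_{11},K_{12},K_{13}$ ($p=2$); you account only for $K_1,K_2,K_7$ and $K_6$. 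Omitting $K_5$ and $K_{11}$ is harmless, since both satisfy the required inequality on $\beta$. But your assertion that the strictness $k>\max(\rho,\sigma)$ fails \emph{only} for $K_6$ with $\beta=\gamma$ is false: $K_{12}$ and $K_{13}$ have $\alpha=\beta=\gamma$, hence $\sigma=\gamma=\beta$ and $a^{2^{\beta}}=[a,b]^{2^{\rho}}\neq 1$ with $\rho<\beta$, so Lemma \ref{nb10} is inapplicable at $k=\beta$ for them exactly as in your flagged case, and your case-$(iii)$ trick is also unavailable there because $\rho<\sigma$. So three configurations, not one, are left unproved by your argument.

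Moreover, the fallback you propose for the unproved configuration is circular. The ``explicit $2$-nilpotent multipliers of Section $4$'' for $K_6$, $K_{12}$, $K_{13}$ are obtained in the paper precisely by applying Theorem \ref{nb}$(ii)$ to these groups and then invoking Corollary \ref{normal2}; there is no independent computation of $\mathcal{M}^{(2)}(K_6)$ against which you could compare $\mathcal{M}^{(2)}(G/\langle a^{2^{\beta}}\rangle)$, so closing the gap this way would require a from-scratch free-presentation computation in the style of Lemmas \ref{k3}--\ref{k1}, which you do not sketch. (Your remark that the element might lie in $Z_2^{*}(G)$ without lying in ``the smaller'' $Z^{*}(G)$ also contradicts Theorem E, which is the first line of the paper's proof.) The gap is repairable inside your own wedge calculus: in all three bad cases one has $\rho\le\beta-1$ and $\alpha\ge\rho+2$ (for $K_{12}$ note that $\rho=\alpha-1$ would make the group isomorphic to the capable group $(\alpha,\alpha,\alpha;\gamma,\gamma)$ of Theorem \ref{pres}, so effectively $\rho\le\alpha-2$), whence your identity $a^{2^{\beta}}\wedge b=(b\wedge[a,b])^{2^{\beta-1}}$ can be iterated: $[a,b]^{2^{\beta-1}}=(a^{2^{\alpha}})^{2^{\beta-1-\rho}}=a^{2^{k}}$ with $k=\alpha+\beta-1-\rho>\beta\ge\max(\rho,\sigma)$, so Lemma \ref{nb10} applies at the exponent $k$; equating the two resulting expressions for $a^{2^{\beta}}\wedge b$ gives $(a\wedge b)^{2^{\beta-1}m}=1$ with $m$ odd, hence $(a\wedge b)^{2^{\beta-1}}=1$ because $G\wedge G$ is a finite $2$-group, and therefore $a^{2^{\beta}}\wedge b=(a\wedge b)^{2^{\beta-1}(2^{\beta}+1)}=1$. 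Until such a computation (or an independent multiplier computation) is supplied for $K_6$ with $\beta=\gamma$, for $K_{12}$, and for $K_{13}$, case $(ii)$ is not proved.
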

 \begin{proof}
Let $G = K_i.$ Then $Z^{*}(G)=Z_2^{*}(G),$ by Theorem E. It is obvious that $ x\in Z^{\wedge}(G)=Z^{*}(G)$ if and only if $x\wedge a=1=x\wedge b.$ Therefore, for each case, it is enough to show that three elements $b^{{p}^{\alpha}}\wedge a,$  $ a^{{p}^{\beta}}\wedge b,$ and $a^{{p}^{\alpha}}\wedge b$ are trivial, respectively. Now, it is easy to achieve the desired result, by using a similar method as stated in Lemma \ref{nb10}.
 \end{proof}
 
 \begin{thm}\label{uu}
Assume that  $ G $ is a  noncapable $ 2$-generator $p$-group of class two  given  in Corollary \ref{cas}$\ (1)$-$ (5)$. Then the structure of $2$- nilpotent multiplier of $G$ is as follows:
\begin{itemize}
\item[$(1)$]Let $ G\cong K_1. $  Then 
$\mathcal{M}^{(2)} (G) \cong   \mathbb{Z}_{ p^{ \beta}}^{(2)}\oplus \mathbb{Z}_{ p^{\rho}}^{ (3)}.$
\item[$(2)$]Let $ G\cong K_2.$ Then 
$ \mathcal{M}^{(2)}(G) \cong  \mathbb{Z}_{ p^{ \beta}}^{ (2)}\oplus \mathbb{Z}_{ p^{ \gamma}}^{ (3) } $
\item[$(3)$]Let $ G\cong K_3.$ Then 
$
 \mathcal{M}^{(2)}(G) \cong  \mathbb{Z}_{ p^{ \alpha}} \oplus  \mathbb{Z}_{ p^{ \beta}}\oplus \mathbb{Z}_{ p^{ \sigma}}^{ (3) }.$
\item[$(4)$]Let $ G\cong K_4.$ Then 
$
 \mathcal{M}^{(2)}(G) \cong \mathbb{Z} _{ p^{  \rho-\sigma+\beta}  }\oplus\mathbb{Z}_{ p^{ \beta}}\oplus \mathbb{Z}_{ p^{ \sigma} }^{ (3) }.$
 \item[$(5)$] Let $ G\cong K_5.$ Then 
$\mathcal{M}^{(2)} (G) \cong \mathbb{Z}_{ p^{ \alpha}}^{ (2)}\oplus \mathbb{Z}_{ p^{\rho}}^{ (3) }.$  
\end{itemize}
\end{thm}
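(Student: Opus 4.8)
The plan is to compute all five multipliers directly from a free presentation, by the method already used in Lemmas~\ref{k3},~\ref{k2}, and~\ref{k1}. For each $K_i$ I write $K_i=F/R$ with $F$ free on $\{a,b\}$; since the class is two, $\gamma_3(F)\subseteq R$ and
\[
\mathcal{M}^{(2)}(K_i)\cong\frac{\gamma_3(F)/\gamma_5(F)}{[R,F,F]/\gamma_5(F)},
\]
with $\gamma_3(F)/\gamma_5(F)$ free abelian of rank five on $[a,b,a],[a,b,b],[a,b,b,b],[a,b,a,a],[a,b,b,a]$ modulo $\gamma_5(F)$. I apply the expansion of \cite[Lemma~3.3]{NP} to each defining relator commuted with two elements of $F$ and reduce modulo $\gamma_5(F)$. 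Since $p$ is odd, $\binom{p^{k}}{2}\equiv0\pmod{p^{k}}$, so the weight-four corrections contribute only $p^{k}$-th powers and never lower an exponent; I also use $[b,a,-]\equiv[a,b,-]^{-1}$ and $[a,b,a,b]\equiv[a,b,b,a]\pmod{\gamma_5(F)}$. Writing the resulting generators of $[R,F,F]/\gamma_5(F)$ as integer vectors against the above basis, the invariant factors follow from a Smith-normal-form computation.

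For $K_1$, $K_2$ and $K_5$ this is immediate: each of these groups has a \emph{pure} power relation on a generator ($b^{p^{\beta}}=1$, or $a^{p^{\alpha}}=b^{p^{\beta}}=1$, or $b^{p^{\alpha}}=1$), which produces clean relations $[a,b,a]^{p^{m}}$ and $[a,b,b]^{p^{m}}$ with no commutator tail, so the relation lattice is already diagonal and the direct sums can be read off at once. The group $K_3$ carries exactly the same mixed relation $b^{p^{\beta}}=[a,b]^{p^{\sigma}}$ as $G_3$, and is fastest disposed of through the epicenter: Theorem~\ref{nb}(i) gives $b^{p^{\alpha}}\in Z_2^{*}(K_3)$, and since $\alpha>\beta$ the relation $b^{p^{\beta}}=[a,b]^{p^{\sigma}}$ forces $[a,b]^{p^{\sigma+\alpha-\beta}}=1$ in the quotient; because $\sigma+\alpha-\beta<\gamma$, the group $K_3/\langle b^{p^{\alpha}}\rangle$ is of type $G_3$ with commutator order $p^{\sigma+\alpha-\beta}$, so Corollary~\ref{normal2} and Theorem~A give $\mathcal{M}^{(2)}(K_3)\cong\mathbb{Z}_{p^{\alpha}}\oplus\mathbb{Z}_{p^{\beta}}\oplus\mathbb{Z}_{p^{\sigma}}^{(3)}$. (The same four multipliers can equally be obtained by the direct computation above.)

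The case $K_4$ is the real obstacle, and the epicenter trick is unavailable here: the element $a^{p^{\alpha}}\in Z_2^{*}(K_4)$ furnished by Theorem~\ref{nb}(iii) only imposes $[a,b]^{p^{\rho}}=1$, and $K_4/\langle a^{p^{\alpha}}\rangle$ is again noncapable, so I must compute $[R,F,F]/\gamma_5(F)$ directly. Now both $r_a=a^{p^{\alpha}}[a,b]^{-p^{\rho}}$ and $r_b=b^{p^{\beta}}[a,b]^{-p^{\sigma}}$ are mixed, so the generator-power parts and the commutator-power parts interleave on the basic commutators. The key features are: the commutator tail of $r_a$ contributes the relation $[a,b,a,a]^{p^{\rho}}=1$, which is \emph{stronger} than the relation $[a,b,a,a]^{p^{\gamma}}=1$ coming from $[a,b]^{p^{\gamma}}$ (as $\rho<\gamma$); and the a priori cross-couplings produced by $r_a$ (pairing $[a,b,a]$ with $[a,b,b,a]$, and $[a,b,b]$ with $[a,b,b,b]$) dissolve, because $\rho>\sigma$ puts $[a,b,b,a]^{p^{\rho}}$ and $[a,b,b,b]^{p^{\rho}}$ already into the lattice. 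After these simplifications the relation lattice splits into the block $\{[a,b,a],[a,b,a,a]\}$, the block $\{[a,b,b],[a,b,b,a]\}$, and the rank-one piece $\{[a,b,b,b]\}$. The nonobvious invariant factor $p^{\rho-\sigma+\beta}$ lives in the first block, governed by $[a,b,a]^{p^{\alpha}}$, $[a,b,a,a]^{p^{\rho}}$ and the coupling $[a,b,a]^{p^{\beta}}[a,b,a,a]^{p^{\sigma}}$: its controlling $2\times2$ minor equals $p^{\rho+\beta}$ exactly because the hypothesis $\rho<\sigma+\alpha-\beta$ gives $\rho+\beta<\alpha+\sigma$. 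Diagonalising this block by a hybrid basis element of the form $[a,b,a]^{p^{\beta-\sigma}}[a,b,a,a]$, exactly as in the proof of Lemma~\ref{k1}, yields invariant factors $p^{\rho-\sigma+\beta},p^{\beta},p^{\sigma},p^{\sigma},p^{\sigma}$, that is $\mathcal{M}^{(2)}(K_4)\cong\mathbb{Z}_{p^{\rho-\sigma+\beta}}\oplus\mathbb{Z}_{p^{\beta}}\oplus\mathbb{Z}_{p^{\sigma}}^{(3)}$. The main technical point throughout is to confirm that the exhibited generators really form a basis of $[R,F,F]/\gamma_5(F)$ inside the free abelian group, so that the Smith-normal-form reading is legitimate.
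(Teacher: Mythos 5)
Your proposal reaches the correct invariants in all five cases, but it takes a genuinely different route from the paper for four of them. The paper never computes $[R,F,F]/\gamma_5(F)$ for any $K_i$: it handles every case by the epicenter reduction you reserve for $K_3$ alone --- produce an element of $Z_2^{*}(K_i)$ via Theorem \ref{nb}, pass to the quotient, identify that quotient with one of the capable groups $G_1$, $G_2$, $G_3$, and conclude by Corollary \ref{normal2} together with Theorem A. In particular, your assertion that for $K_4$ ``the epicenter trick is unavailable'' is mistaken: the paper simply applies it twice. Killing $a^{p^{\alpha}}$ gives the noncapable group $H$ in which $b_1^{p^{\rho-\sigma+\beta}}=1$ is now a \emph{pure} power relation, so Theorem \ref{nb}$(ii)$ (with the roles of the two generators exchanged) yields $a_1^{p^{\rho-\sigma+\beta}}\in Z_2^{*}(H)$, and $H/\langle a_1^{p^{\rho-\sigma+\beta}}\rangle\cong G_3$; two applications of Corollary \ref{normal2} then finish the case. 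Your direct Smith-normal-form computation for $K_4$ is nonetheless sound: the block on $\{[a,b,a],[a,b,a,a]\}$ with relation rows $(p^{\alpha},0)$, $(0,p^{\rho})$, $(p^{\beta},p^{\sigma})$ has entry gcd $p^{\sigma}$ and minor gcd $p^{\beta+\rho}$ exactly because $\rho<\sigma+\alpha-\beta$, giving the factors $p^{\sigma}$ and $p^{\rho-\sigma+\beta}$, and this argument has the merit of being self-contained (essentially Lemma \ref{k1} rerun with new parameters), whereas the paper's route leans on the whole chain Theorem \ref{nb}, Theorem E, and Theorem A. One caution on $K_1$ and $K_5$: these groups also carry the mixed relator $a^{p^{\alpha}}[a,b]^{-p^{\rho}}$, so the generating set of $[R,F,F]/\gamma_5(F)$ is \emph{not} literally diagonal as you claim; the cross-terms such as $[a,b,a]^{p^{\alpha}}[a,b,b,a]^{-p^{\rho}}$ dissolve only after noting that bracketing the mixed relator with $a$ in the first slot already places $[a,b,a,a]^{p^{\rho}}$ and $[a,b,b,a]^{p^{\rho}}$ in the lattice, and that $[a,b,a]^{p^{\alpha}}$ and $[a,b,b]^{p^{\alpha}}$ are absorbed by the pure relation of smaller or equal exponent. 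That is precisely the dissolution step you spell out for $K_4$, so you should state it for $K_1$ and $K_5$ as well; with that one-line repair, your computation is complete and agrees with the theorem.
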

\begin{proof}
\begin{itemize}
\item[$(1)$] Let $ G\cong K_1. $ Then by Theorem \ref{nb}$ (ii),\   a^{p^{\beta}} \in Z_2^{*}(G). $ Since $G/\langle a^{p^{\beta}}\rangle \cong G_2,$  we will have the result by Theorem A and  Corollary \ref{normal2}.
  \item[$(2)$] Let $ G\cong K_2.$ It is not difficult  to see that $G/\langle a^{p^{\beta}}\rangle \cong G_1,$ when $ \beta=\gamma,$ and otherwise $G/\langle a^{p^{\beta}}\rangle \cong G_2.$ Now, the result is in hand by Theorem \ref{nb}$(ii),$ Theorem A, and Corollary \ref{normal2}.
  \item[$(3)$] Let $ G\cong K_3.$ Theorem \ref{nb}$(i)$ implies that $b^{p^{\alpha}}\in Z_2^{*}(G),$ and hence 
  $\mathcal{M}^{(2)} (G) \cong \mathcal{M}^{(2)} (G/\langle b^{p^{\alpha}}\rangle),$ by  Corollary \ref{normal2},
  in which
     \[ G/ \langle b^{p^{\alpha}}\rangle \cong \langle a_1, b_1 \mid   a_1 ^{ p^{ \alpha }}= b_1 ^{p^{ \alpha} }=[ a_1 , b_1 ] ^{ p^{ \alpha-\beta+\sigma }} =1, b_1 ^{p^{ \beta } } =[ a_1 , b_1 ] ^{ p^{ \sigma }},\]\[[ a_1, b_1, b_1 ]= [ a_1, b_1 , a_1] = 1, 0\leq \sigma <\sigma+\alpha-\beta,\beta<\alpha \rangle.\] Since  $ \alpha -\beta=\alpha-\beta+\sigma-\sigma,$ we get $ G/ \langle b^{p^{\alpha}}\rangle $ is capable. So,  $G/\langle b^{p^{\alpha}}\rangle \cong G_3.$  The result is concluded that by Theorem A. 
 \item[$(4)$] Let $ G\cong K_4.$ From Theorem \ref{nb}$ (iii),$  we have $a^{p^{\alpha}}\in Z_2^{*}(G).$ Since,
\[ G/ \langle a^{p^{\alpha}}\rangle \cong H= \langle a_1, b_1 \mid   a_1 ^{ p^{ \alpha }}=b_1 ^{ p^{ \rho-\sigma+\beta }}=[ a_1 , b_1 ] ^{ p^{ \rho }} =1, b_1 ^{p^{ \beta } } =[ a_1 , b_1 ] ^{ p^{ \sigma }},\]\[[ a_1, b_1, b_1 ]= [ a_1, b_1 , a_1] = 1,0\leq \sigma<\rho< \sigma+\alpha-\beta, \beta<\alpha \rangle,\]
and $ \rho< \sigma+\alpha-\beta, $ we have $ G/ \langle a^{p^{\alpha}}\rangle $ is noncapable.  Theorem \ref{nb}$ (ii)$ follows that $ a_1^{ p^{ \rho-\sigma+\beta }} \in Z_2^*(H).$ So, $\mathcal{M}^{(2)} (G)\cong \mathcal{M}^{(2)} (H) \cong \mathcal{M}^{(2)} (H/ \langle  a_1^{ p^{ \rho-\sigma+\beta }}\rangle), $ by  Corollary \ref{normal2}. Now, one can obtain the result by Theorem A, because of
 $ H/ \langle  a_1^{ p^{ \rho-\sigma+\beta }}\rangle \cong G_3.$  
  \item[$(5)$] Let $ G\cong K_5.$  Then $a^{p^{\alpha}} \in Z^*(G),$ by Theorem \ref{nb}$ (ii)$. Therefore $\mathcal{M}^{(2)} (G) \cong \mathcal{M}^{(2)} (G/\langle a^{p^{\alpha}}\rangle)\cong  \mathcal{M}^{(2)} (G_2),$  as required.
\end{itemize}
\end{proof}
 
The next result gives the structure of $2$- nilpotent multiplier of other two generator groups. 
 \begin{thm}
Let $ G $ be a  noncapable $ 2$-generator $2$-group of  class two   given  in Corollary  \ref{cas}$\ (6)$-$ (14)$.
\begin{itemize}
\item[(1)]Let $ G\cong K_6. $ Then
$\mathcal{M}^{(2)} (G) \cong  \mathbb{Z}_{ 2^{ \beta}}^{ (2)}\oplus \mathbb{Z}_{ 2^{\rho}}^{ (3)}.$
\item[(2)]Let $ G\cong K_7. $ Then
 $\mathcal{M}^{(2)}(G) \cong  \mathbb{Z}_{ 2^{ \beta}}^{ (2)}\oplus \mathbb{Z}_{ 2^{\gamma}}^{ (3)}.$
\item[(3)]Let $ G\cong K_8. $ Then
$\mathcal{M}^{(2)} (G) \cong  \mathbb{Z}_{ 2^{ \beta-1}}^{ (3)}\oplus \mathbb{Z}_{ 2^{\beta}}\oplus \mathbb{Z}_{ 2^{\beta+1}}.$
\item[(4)]Let $ G\cong K_9. $ Then
$
 \mathcal{M}^{(2)}(G) \cong \mathbb{Z}_{ 2^{ \alpha}} \oplus  \mathbb{Z}_{ 2^{ \beta}}\oplus \mathbb{Z}_{ 2^{ \sigma}}^{ (3) }.$
\item[(5)]Let $ G\cong K_{10}. $ Then
$
 \mathcal{M}^{(2)}(G) \cong \mathbb{Z} _{ 2^{  \rho-\sigma+\beta}  }\oplus\mathbb{Z}_{ 2^{ \beta}}\oplus \mathbb{Z}_{ 2^{ \sigma} }^{ (3) }.$
\item[(6)]Let $ G\cong K_{11}. $ Then
$\mathcal{M}^{(2)} (G) \cong  \mathbb{Z}_{ 2^{ \alpha}}^{ (2)}\oplus \mathbb{Z}_{ 2^{\rho}}^{ (3)}.$\item[(7)]Let $ G\cong K_{12}. $ Then
$\mathcal{M}^{(2)} (G) \cong  \mathbb{Z}_{ 2^{ \alpha}}^{ (2)}\oplus \mathbb{Z}_{ 2^{\rho}}^{ (3)}.$\item[(8)]Let $ G\cong K_{13}. $ Then
$\mathcal{M}^{(2)} (G) \cong  \mathbb{Z}_{ 2^{ \beta}}^{ (2)}\oplus \mathbb{Z}_{ 2^{\rho}}^{ (3)}.$\item[(9)]Let $ G\cong K_{14}. $ Then
$\mathcal{M}^{(2)} (G) \cong  \mathbb{Z}_{ 2^{ \alpha}}^{ (2)}\oplus \mathbb{Z}_{ 2^{\alpha-1}}^{ (3)}.$
\end{itemize}
\end{thm}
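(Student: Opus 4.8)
The plan is to run the same two–step reduction that proved Theorem \ref{uu}, now over the prime $p=2$, and then to isolate the two genuinely $2$–specific groups that resist it. Throughout, the free presentation $G=F/R$ with $F=\langle a,b\rangle$ and the basis of $\gamma_3(F)/\gamma_5(F)$ by the five commutators $[a,b,a],[a,b,b],[a,b,b,b],[a,b,b,a],[a,b,a,a]$ are kept as in Section 2, and the target quotients will be the capable $2$–groups $G_4$–$G_7$ of Corollary \ref{2cap1}(b), whose $2$–nilpotent multipliers are recorded in \cite{FKJ}.

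For the seven groups $K_6,K_7,K_9,K_{10},K_{11},K_{12},K_{13}$ I would first invoke Theorem \ref{nb} to exhibit a nontrivial element of $Z_2^{*}(G)$: the element $a^{2^{\beta}}$ when $b^{2^{\beta}}=1$ (cases $K_6,K_7,K_{13}$), the element $b^{2^{\alpha}}$ when $a^{2^{\alpha}}=1$ (case $K_9$), the element $a^{2^{\alpha}}$ when $\sigma<\rho$ (case $K_{10}$), and the analogous power of $a$ for $K_{11},K_{12}$. By Corollary \ref{normal2}, factoring out the cyclic subgroup generated by such an element preserves $\mathcal{M}^{(2)}$, so it remains only to recognize the quotient. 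Normalizing the resulting presentation through \cite[Proposition 3.1]{AMM}, each quotient collapses onto one of $G_4$–$G_7$, usually with shifted parameters; for example, killing $a^{2^{\beta}}$ in $K_6$ forces $[a,b]^{2^{\rho}}=1$ and produces a $G_6$–type group on parameters $(\beta,\beta,\rho)$, whose multiplier $\mathbb{Z}_{2^{\beta}}^{(2)}\oplus\mathbb{Z}_{2^{\rho}}^{(3)}$ is the asserted answer. Where one reduction does not yet reach a capable group (this happens for $K_{10}$), I would iterate the reduction exactly as in the proof of Theorem \ref{uu}(4).

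The two remaining groups $K_8$ and $K_{14}$ are the crux and must be handled by the direct basic–commutator computation of Section 2 rather than by reduction. They are precisely the configurations in which the commutator order equals $2^{\beta}$ (respectively $2^{\alpha}$) while no power relation expresses $a^{2^{\alpha}}$ or $b^{2^{\beta}}$ nontrivially in terms of $[a,b]$; this is why $K_8$ is excluded from Theorem \ref{nb} and why $K_{14}$ fails all of its hypotheses. The source of the trouble is the exterior–square identity of Lemma \ref{nb10}, which at $p=2$ only gives $a^{2^{k}}\wedge b=(a\wedge b)^{2^{k-1}(2^{k}+1)}$: since the factor $2^{k}+1$ is odd it carries only $2^{k-1}$, so the candidate central element does not lie in $Z^{\wedge}(G)$ and cannot be divided out. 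I would therefore compute a basis of $[R,F,F]/\gamma_5(F)$ inside the free abelian group $\gamma_3(F)/\gamma_5(F)$ directly, this time \emph{retaining} the $\binom{2^{k}}{2}$ correction terms that were discarded for odd $p$ in Lemmas \ref{k3}–\ref{k1}.

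The main obstacle is exactly this final computation. For $K_8$ and $K_{14}$ the subgroup $[R,F,F]/\gamma_5(F)$ is no longer generated by pure powers of individual basis commutators, so the quotient is not read off as a transparent direct sum: the factors $2^{k-1}(2^{k}\pm 1)$ couple the five generators, and one must put the resulting integer relation matrix into Smith normal form to extract the invariant factors. Carrying this out should yield $\mathbb{Z}_{2^{\beta-1}}^{(3)}\oplus\mathbb{Z}_{2^{\beta}}\oplus\mathbb{Z}_{2^{\beta+1}}$ for $K_8$ and $\mathbb{Z}_{2^{\alpha}}^{(2)}\oplus\mathbb{Z}_{2^{\alpha-1}}^{(3)}$ for $K_{14}$; the appearance of the off–pattern exponents $\beta+1$, $\beta-1$, and $\alpha-1$ is the signature of this coupling and is what makes these two groups substantively harder than the other seven.
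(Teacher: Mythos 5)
Your plan for the seven groups $K_6$, $K_7$, $K_9$, $K_{10}$, $K_{11}$, $K_{12}$, $K_{13}$ is exactly the paper's proof: produce an element of $Z_2^{*}(G)$ from Theorem \ref{nb}, pass to the quotient via Corollary \ref{normal2}, recognize the quotient among $G_4$--$G_7$ of Corollary \ref{2cap1}(b), and quote \cite[Theorem 3.3]{FKJ} (with the extra iteration for $K_{10}$, as in Theorem \ref{uu}(4)). The genuine gap is precisely in the two cases you single out as the crux. For $K_8$ and $K_{14}$ you propose a direct basic-commutator computation of $[R,F,F]/\gamma_5(F)$ with the $\binom{2^k}{2}$ correction terms and a Smith-normal-form step, but you never carry it out: ``carrying this out should yield'' the exponents $\beta-1,\beta,\beta+1$ (resp.\ $\alpha,\alpha-1$) is a prediction, not a proof. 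Worse, that computation is essentially the content of \cite[Theorem 3.3]{FKJ} (it is exactly the $p=2$ coupling phenomenon that makes $\mathcal{M}^{(2)}(G_4)$ differ from $\mathbb{Z}_{2^{\beta}}^{(5)}$), so your route amounts to re-deriving the cited result rather than using it, and the hardest two entries of the theorem remain unproven in your write-up.

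Moreover, your premise that these two groups \emph{cannot} be handled by reduction is false, and the idea you are missing is the one the paper actually uses: one never needs to exhibit an explicit exterior-central element. Since $Z^{*}(G)\subseteq Z_2^{*}(G)$, noncapability alone gives $Z_2^{*}(G)\neq 1$; and if $C$ is a central cyclic subgroup with $G/C$ $2$-capable, then $Z_2^{*}(G)$ is a nontrivial subgroup of $C$ and therefore contains the unique minimal subgroup of $C$. For $K_{14}$ take $d=a^{2^{\alpha}}=b^{2^{\alpha}}=[a,b]^{2^{\alpha-1}}$, an element of order $2$: the quotient $K_{14}/\langle d\rangle$ is the group $G_6$ of Corollary \ref{2cap1}(b) with parameters $(\alpha,\alpha,\alpha-1)$, which is $2$-capable, so $Z_2^{*}(K_{14})=\langle d\rangle$, and Corollary \ref{normal2} gives $\mathcal{M}^{(2)}(K_{14})\cong\mathcal{M}^{(2)}(G_6)\cong\mathbb{Z}_{2^{\alpha}}^{(2)}\oplus\mathbb{Z}_{2^{\alpha-1}}^{(3)}$ in one step; this is why the paper can list $K_{14}$ together with $K_6,K_7,K_{11},K_{12},K_{13}$. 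For $K_8$ the paper argues the same way with $C=\langle a^{2^{\beta}}\rangle$: since $K_8/\langle a^{2^{\beta}}\rangle\cong G_4$ is $2$-capable, $1\neq Z_2^{*}(K_8)\subseteq\langle a^{2^{\beta}}\rangle$, hence $a^{2^{\alpha-1}}\in Z_2^{*}(K_8)$; dividing it out preserves $\mathcal{M}^{(2)}$, the quotient is again of the same shape with $\alpha$ lowered by one, and iterating until a $2$-capable group of Corollary \ref{2cap1}(b) is reached reduces everything to \cite[Theorem 3.3]{FKJ}. Note that this trapping argument makes no use of Lemma \ref{nb10} at all, so the $p=2$ defect of that lemma which you correctly identify (the surviving factor $2^{k-1}$) is simply irrelevant: it blocks Theorem \ref{nb}, not the reduction itself.
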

\begin{proof} 
\begin{itemize}
\item[$(1)$]Let $ G\cong K_i,$  for $i=6,7,11,12,13,14.$ By a similar way used in Theorem \ref{uu},  we can observe that  $ \mathcal{M}^{(2)} (G) \cong\mathcal{M}^{(2)}(G_6), $  and the result follows by  \cite[Theorem 3.3]{FKJ}.  Let $G\cong K_8.$ Since $G/\langle a^{2^{\beta}}\rangle$ is $2$-capable, we will have $1\neq Z_2^{*}(G)\subseteq \langle a^{2^{\beta}}\rangle. $  Hence, $Z_2^{*}(G)=\langle a^{2^{\beta+q}}\rangle, $  and therefore  $a^{2^{\alpha-1}}\in Z_2^{*}(G),$ because of $\alpha>\beta+1.$ Let $H_0=G$ and $H_i=H_{i-1}/\langle a^{2^{\alpha-i}}\rangle $ for all $ i $ with $1\leq i\leq \alpha-\beta.$ It is easy to see that $ \mathcal{M}^{(2)} (H_{i-1})\cong  \mathcal{M}^{(2)} (H_{i}),$ whenever $\alpha -i=\beta.$ Hence, if $\alpha -1=\beta,$ then $ \mathcal{M}^{(2)} (G)\cong  \mathcal{M}^{(2)} (G/\langle a^{2^{\beta}}\rangle) \cong  \mathcal{M}^{(2)} (G_4).$ Since $\alpha>\beta,$ then there exists an element $j$ such that $\alpha -j=\beta.$ We increase $i$ from $1$ up to $j$  and calculate $H_i$' s.  Now, one can check that $ \mathcal{M}^{(2)} (G)\cong\mathcal{M}^{(2)} (H_{j})\cong\mathcal{M}^{(2)} (H_{j}/\langle a^{2^{\alpha-j}} \rangle)\cong  \mathcal{M}^{(2)} (G/\langle a^{2^{\beta}}\rangle),$ and obtain $ \mathcal{M}^{(2)} (H_i)\cong  \mathcal{M}^{(2)} (G_4) $ for all $ i $ with $0\leq i\leq \alpha-\beta.$ Hence, the result is in hand by \cite[Theorem 3.3]{FKJ}. 
    Now, let $G\cong K_9.$ Then
\begin{center}
$G/\langle b^{2^{\alpha}}\rangle \cong  \langle a_1,b_1\mid [a_1,b_1]^{2^{\sigma+\alpha - \beta}}=a_1^{2^{\alpha}}=1, b_1^{2^{\beta}}=[a,b]^{2^{\sigma}},\beta>\sigma\rangle 
\cong G_7.$
\end{center} 
  Since
  $b^{2^{\alpha}}\in Z^*(G)=Z_2^*(G),$ by Theorem \ref{nb}, we will have  $  \mathcal{M}^{(2)} (G) \cong \mathcal{M}^{(2)} (G /\langle b^{2^{\alpha}} \rangle)\cong  \mathcal{M}^{(2)} (G_7),$  as desirable.  For $ G\cong K_{10},$ the result is attained by a  similar method to $K_9.$

\end{itemize}
\end{proof}

\bigskip
\section*{Acknowledgement} F. Johari  was  supported  by the postdoctoral grant  `` CAPES/PRINT-Edital $n^\circ 41/2017$, Process number: 88887.511112/2020-00,''  at the Federal University of Minas Gerais.

\hspace{1in}

\end{document}